\def\1{\mathbb{1}}
\def\R{\mathbb{R}}
\def\LL{{\cal L}}
\def\d{\,\mathrm{d}}
\def\dx{\,\mathrm{d}x}
\def\dy{\,\mathrm{d}y}
\def \fa {\quad \text{ for all }}
\newtheorem{thm}{Theorem}[section]
\newtheorem{lem}[thm]{Lemma}
\newtheorem{hyp}[thm]{Hypothesis}
\theoremstyle{definition}
\newtheorem{dfn}[thm]{Definition}
\theoremstyle{remark}
\newtheorem{rem}{Remark}[section]
\title{Fine asymptotics of profiles and relaxation to equilibrium for
  growth-fragmentation equations with variable drift rates}
\author{
  \textsc{Daniel Balagu\'e}\footnote{Departament de Matem\`atiques,
    Universitat Aut\`onoma de Barcelona, 08193 Bellaterra, Spain. Email:
    \texttt{dbalague@mat.uab.es}}
  \and
  \textsc{Jos\'e A. Ca\~{n}izo}\footnote{School of Mathematics, Watson Building,
  University of Birmingham, Edgbaston, Birmingham B15 2TT, UK. Email:
   \texttt{j.a.canizo@bham.ac.uk}}
  \and
  \textsc{Pierre Gabriel}\footnote{Laboratoire de Math\'ematiques de Versailles, CNRS UMR 8100,
  Universit\'e de Versailles Saint-Quentin-en-Yvelines, 45 Avenue de \'Etats-Unis, 78035~Versailles~cedex, France.
  Email: \texttt{pierre.gabriel@uvsq.fr}}
}
\begin{document}

\maketitle

\begin{abstract}
  We are concerned with the long-time behavior of the
  growth-frag\-men\-ta\-tion equation. We prove fine estimates on the
  principal eigenfunctions of the growth-fragmentation operator,
  giving their first-order behavior close to $0$ and $+\infty$. Using
  these estimates we prove a spectral gap result by following the
  technique in~\cite{MR2832638}, which implies that solutions decay to the
  equilibrium exponentially fast. The growth and fragmentation
  coefficients we consider are quite general, essentially only assumed
  to behave asymptotically like power laws.
\end{abstract}

\

\noindent{\bf Keywords.} Fragmentation, growth, eigenvalue problem, entropy, exponential convergence, long-time behavior.

\

\noindent{\bf AMS Class. No.} 35B40, 45C05, 45K05, 82D60, 92C37

\

\tableofcontents

\section{Introduction}

In this paper we are interested in the long-time behavior of the
growth-frag\-men\-ta\-tion equation, commonly used as a model for cell
growth and division and other phenomena involving fragmentation
\cite{MR2270822,MR0860959,GabrielPhD}. There are a number of works which study the
existence and other properties of the first eigenfunctions (also
called \emph{profiles}) of the growth-fragmentation operator and its
dual \cite{MR2250122,MR2652618,MR2114413} and the convergence of solutions to
equilibrium \cite{MR2536450,MR2114128,MR2162224,MR2065377,MR2832638,PS}. These
eigenfunctions are fundamental since they give the asymptotic shape of
solutions (i.e., the stationary solution of the rescaled equation) and
a conserved quantity of the time evolution. However, precise estimates
on their behavior close to $0$ and $+\infty$ are usually not given,
are very rough, or are restricted to a particular kind of growth or
fragmentation coefficients. Our first objective is to give accurate
estimates on the first eigenfunctions, valid for a wide range of
growth and fragmentation coefficients which include most cases in
which they behave like power laws. We give, in most cases, the
first-order behavior of both first eigenfunctions (of the
growth-fragmentation operator and its dual); detailed results are
given later in this introduction.

Our second objective is to use these estimates to show that the
growth-fragmenta\-tion operator has a spectral gap (in a certain natural
Hilbert space) for a wide choice of the coefficients, which is
interesting because it readily implies exponential convergence to
equilibrium of solutions. For this we follow the techniques in
\cite{MR2832638}, which require careful estimates on the profiles which
were previously available only for particular growth rates (constant
and linear). Our results on exponential convergence to equilibrium are
valid for general coefficients behaving like power laws, improving or
complementing known results applicable to constant or linear total
fragmentation rates \cite{MR2536450,MR2114128,MR2832638}. However,
our results still impose some restrictions on the fragment
distribution (which must be bounded below) and the decay of the total
fragmentation rate for small sizes.

Let us introduce the equation under study more precisely and state our
main results. The \emph{growth-fragmentation equation} is given by
\begin{subequations}
  \label{eq:growth-frag}
  \begin{gather}
    \label{eq:growth-frag-eq}
    \partial_t g_t(x) + \partial_x (\tau(x) g_t(x)) + \lambda g_t(x)
    = \LL [g_t](x),
    \\
    \label{eq:growth-frag-boundary}
    (\tau g_t)(0) = 0 \qquad (t \geq 0),
    \\
    \label{eq:growth-frag-initial}
    g_0(x) = g_{\mathrm{in}}(x) \qquad (x > 0).
  \end{gather}
\end{subequations}
The unknown is a function $g_t(x)$ which depends on the time $t \geq
0$ and on $x > 0$, and for which an initial condition
$g_{\mathrm{in}}$ is given at time $t=0$. The positive function $\tau$
represents the \emph{growth rate}. The symbol $\LL$ stands for the
fragmentation operator (see below), and $\lambda$ is the largest
eigenvalue of the operator $g \mapsto - \partial_x (\tau g) + \LL
g$, acting on a function $g=g(x)$ depending only on $x$. The main
motivation for the study of equation \eqref{eq:growth-frag} is the
closely related
\begin{equation}
  \label{eq:growth-frag-orig}
  \partial_t n_t(x) + \partial_x (\tau(x) n_t(x))
  = \LL [n_t](x),
\end{equation}
with the same initial and boundary conditions. Solutions of the two
are related by $n_t(x) = e^{\lambda t} g_t(x)$, and $n_t$ represents
the size distribution at a given time $t$ of a population of cells (or
other objects) undergoing growth and division phenomena. The
population grows at an exponential rate determined by $\lambda > 0$,
called the \emph{Malthus parameter}, and approaches an asymptotic
shape for large times. Equation \eqref{eq:growth-frag} has a
stationary solution and is more convenient for studying its asymptotic
behavior, which is why it is commonly considered. Of course, results
about \eqref{eq:growth-frag} are easily translated to results about
\eqref{eq:growth-frag-orig} through the simple change $n_t(x) =
e^{\lambda t} g_t(x)$.

The \emph{fragmentation operator} $\LL$ acts on a function $g = g(x)$
as
\begin{equation*}
  %\label{eq:frag-op}
  \LL g(x) :=  \LL_+g(x)  - B(x) g(x),
\end{equation*}
where the positive part $\mathcal{L}_+$ is given by
\begin{equation*}
  %\label{eq:L+}
  \LL_+ g(x) :=  \int_x^\infty b(y,x) g(y) \,dy.
\end{equation*}
The coefficient $b(y,x)$, defined for $y \geq x \geq 0$, is the
\emph{fragmentation coefficient}, and $B(x)$ is the \emph{total
  fragmentation rate} of cells of size $x > 0$. It is obtained from
$b$ through
\begin{equation*}
  %\label{eq:B}
  B(x) := \int_0^x \frac{y}{x} \, b(x,y) \,dy
  \qquad (x > 0).
\end{equation*}

The \emph{eigenproblem} associated to \eqref{eq:growth-frag} is the
problem of finding both a stationary solution and a stationary
solution of the dual equation, this is, the first eigenfunction of the
growth-fragmentation operator $g \mapsto -(\tau g)' + \LL(g)$ and of
its dual $\varphi \mapsto \tau \varphi' + \LL^*(\varphi)$. If
$\lambda$ is the largest eigenvalue of the operator $g \mapsto
- (\tau\, g)' + \LL g$, the associated eigenvector $G$
satisfies
\begin{subequations}
  \label{eq:G}
  \begin{gather}
    \label{eq:G-eq}
    (\tau(x)\,G(x))' + \lambda G(x)
    = \LL (G)(x),
    \\
    \label{eq:G-boundary}
    \tau(x)G(x)\big\vert_{x=0} = 0,
    \\
    \label{eq:G-positive-normalized}
    G \geq 0,
    \quad
    \int_0^\infty G(x)\,dx = 1.
  \end{gather}
\end{subequations}
Of course, the eigenvector $G$ is an equilibrium (i.e., a stationary
solution) of equation~\eqref{eq:growth-frag}. The associated dual
eigenproblem reads
\begin{subequations}
  \label{eq:phi}
  \begin{gather}
    \label{eq:phi-eq}
    - \tau(x) \phi'(x)
    + (B(x)+\lambda)\, \phi(x)
    = \LL_+^{*} \phi(x),
    \\
    \label{eq:phi-positive-normalized}
    \phi \geq 0,
    \quad
    \int_0^\infty G(x) \phi(x) \,dx = 1,
  \end{gather}
\end{subequations}
where
\begin{equation*}
  %\label{eq:defDualLL+}
  \LL_+^{*} \phi(x):=   \int_0^x b(x,y) \phi(y) \,dy.
\end{equation*}
This dual
eigenproblem is interesting because $\phi$ gives a conservation law
for \eqref{eq:growth-frag}:
\begin{equation*}
  %\label{eq:phi-conservation}
  \int_0^\infty \phi(x)\, g_t(x) \,dx
  =
  \int_0^\infty \phi(x)\, g_{\mathrm{in}}(x) \,dx
  = \text{Cst}
  \qquad (t \geq 0).
\end{equation*}
In this paper we always denote by $G$, $\phi$ and $\lambda$ the
solution to (\ref{eq:G}) and (\ref{eq:phi}).

In the rest of this introduction we describe the assumptions used
throughout the paper and state our main results. In Section
\ref{sec:G} we give the proof of our estimates on the stationary
solution $G$, and Section \ref{sec:phi} is devoted to estimates of the
dual eigenfunction $\phi$. Our results on the spectral gap of the
growth-fragmentation operator are proved in Section \ref{sec:entropy},
and we also include two appendices: one, Appendix
\ref{sec:app:approximation}, on different approximation procedures
that may be used for $G$ and $\phi$, and which are more convenient in
some of our arguments; and Appendix \ref{sec:Laplace}, which gives
asymptotic estimates of some of the expressions involving the positive
part $\LL^+$ of the fragmentation operator, and are used for our
large-$x$ estimates of $G$.

\subsection{Assumptions on the coefficients}

For proving our results we need some or all of the following
assumptions. First of all, we assume that the fragmentation
coefficient $b$ is of self-similar form, which is general enough to
encompass most interesting examples and still allows us to obtain
accurate results on the asymptotics of $G$ and $\phi$:

\begin{hyp}[Self-similar fragmentation rate]
  \label{hyp:self-similar-frag}
  The coefficient $b(x,y)$
  is of the form
  \begin{equation}
    \label{eq:b-ss}
    b(x,y) = B(x) \frac{1}{x}\,p\!\left( \frac{y}{x} \right)
  \end{equation}
  for some locally integrable $B:(0,+\infty) \to (0,+\infty)$, and
  some nonnegative finite measure $p$ on $[0,1]$ satisfying the mass preserving condition
  \begin{equation*}
    \int_0^1 z\, p(z) \,dz = 1,
  \end{equation*}
  and also the condition
  \begin{equation*}
    \int_0^1 p(z) \,dz > 1.
  \end{equation*}
  (When writing the integral of a measure it is always understood that
  the integration limits are included in the integral.)
\end{hyp}
The measure $p$ gives the distribution of fragments obtained when a
particle of a certain size breaks.
% For later use we define
% \begin{equation}\label{eq:value-of-kappa}
% \pi_0:=\int_{0}^{1}p(z)dz.
% \end{equation}

\begin{rem}\label{rem:b-moment-bounds}
  Define, for $k\geq0,$ the moment
  \begin{equation*}
    %\label{eq:value-of-pk}
    \pi_k:=\int_{0}^{1}z^kp(z)\,dz.
  \end{equation*}
  We have from Hypothesis~\ref{hyp:self-similar-frag} that $\pi_1=1$ and $\pi_0>1.$
  Physically, $\pi_0$ represents the mean quantity
  of fragments produced by the fragmentation of one particle.
  Because of the strict inequality $\pi_0>\pi_1,$ one can deduce that $p$ is not
  concentrated at $z=1$ ({\it i.e.} $p\neq \pi_0\delta_1$).
  As a consequence we have that $\pi_k<1$ if $k>1$ and $\pi_k>1$ if $k<1.$
\end{rem}

\begin{hyp}
  \label{hyp:tau}
  The growth rate is a continuous and strictly positive function
  $\tau:(0,+\infty) \to (0,+\infty)$.
\end{hyp}

Our next assumption says that the growth rate and total fragmentation
rate have a power-law behavior for large and small sizes:

\begin{hyp}[Asymptotics of fragmentation and drift rates]
  \label{hyp:asymptotics}
  Assume that for some constants $B_0, B_\infty, \tau_0, \tau_\infty >
  0$ and $\gamma_0, \gamma, \alpha_0, \alpha \in \R$
  \begin{gather}
    \label{eq:B-asymptotics-0}
    B(x) \sim B_0 \, x^{\gamma_0}
    \quad \text{as $x \to 0$},
    \\
    \label{eq:B-asymptotics-infty}
    B(x) \sim B_\infty \, x^{\gamma}
    \quad \text{as $x \to +\infty$},
    \\
    \label{eq:tau-asymptotics-0}
    \tau(x) \sim \tau_0 \, x^{\alpha_0}
    \quad \text{as $x \to 0$},
    \\
    \label{eq:tau-asymptotics-infty}
    \tau(x) \sim \tau_\infty \, x^{\alpha}
    \quad \text{as $x \to +\infty$}.
  \end{gather}
  We also impose the conditions
  \begin{gather}
    \label{eq:gamma0-alpha0+1}
    \gamma_0-\alpha_0+1 > 0,
    \\
    \label{eq:gamma-alpha+1}
    \gamma-\alpha+1 > 0,
  \end{gather}
 to ensure the existence of a solution to the {\it eigenproblem} (see~\cite{MR2652618,MR2250122}).
\end{hyp}

Likewise, we impose that the distribution of small fragments behave
like a power law:

\begin{hyp}[Behavior of $p$ close to $0$]
  \label{hyp:p-at-borders}
  There exist $p_0 %, p_1
 \geq 0$ and $\mu > 0$
 such that
  \begin{equation}
    \label{eq:p-at-0}
    p(z) = p_0 \,z^{\mu-1}  + o(z^{\mu-1}) \quad \text{as $z \to 0$},
  \end{equation}
 with the condition
\begin{equation}\label{eq:mu-alpha0+1}
    \mu - \alpha_0 + 1 > 0.
  \end{equation}

\end{hyp}

\begin{rem}\label{rk:p0}
  When $p_0 > 0$ condition (\ref{eq:p-at-0}) is the same as
  \begin{equation}
    p(z) \sim p_0 \,z^{\mu-1}\label{eq:p-at-0-again}
  \end{equation}
  as $z \to 0$. We prefer to write it as given in order to allow for
  $p_0 = 0$, which is usually not allowed in the notation
  (\ref{eq:p-at-0-again}). For instance, if $p(z)$ is equal to $0$ in
  a neighborhood of $0$ (such as for the mitosis case, see below),
  then (\ref{eq:p-at-0}) holds with $p_0=0$, but
 (\ref{eq:p-at-0-again}) does not make sense.
%   An analogous
%   observation is valid for condition (\ref{eq:p-at-1}).
\end{rem}

To find the asymptotic behavior of the function $G$ when $x\to\infty$
the following hypothesis will also be needed.
\begin{hyp}[Asymptotics to second order]
  \label{hyp:asymptotics-2nd}
  Assume that $\tau$ is a $\mathcal{C}^1$ function and that, for some
  $\delta > 0$ and $\nu > -1$,
  \begin{gather}
    \label{eq:B-asymptotics-infty-2nd}
    B(x) = B_\infty \, x^{\gamma} + O(x^{\gamma - \delta})
    \quad \text{as $x \to +\infty$},
    \\
    \label{eq:tau-asymptotics-infty-2nd}
    \tau(x) = \tau_\infty \, x^{\alpha} + O(x^{\alpha - \delta})
    \quad \text{as $x \to +\infty$},
    \\
    \label{eq:p-asymptotics-2nd}
    p(z) = p_1(1-z)^{\nu} + O((z-1)^{\nu+\delta})
    \quad \text{as $z \to 1$}.
  \end{gather}
\end{hyp}

Finally, to prove the entropy-entropy dissipation inequality, we will
need an additional restriction on the fragmentation coefficient. It
essentially says that $p$ is uniformly bounded below by some constant
$\underline p > 0$, and that it behaves like a constant at the
endpoints $0$ and $1$:
\vspace*{-5pt}
\begin{hyp}
  \label{hyp:p-bounded}
  There exist positive constants $\underline p,\,p_0,\,p_1>0$ such that
  \begin{equation*}
    %\label{eq:p-bounded}
    \forall\,z\in(0,1),\quad p(z) \geq \underline p\qquad \text{(in the sense of measures)},
  \end{equation*}
\[\quad p(z)\xrightarrow[z\to0]{}p_0,\qquad p(z)\xrightarrow[z\to1]{}p_1,\]
and $\alpha_0<2$ (which is nothing but
condition~\eqref{eq:mu-alpha0+1} in the case $\mu=1$).
\end{hyp}

The reader may check that \cite[Theorem 1]{MR2652618}, which gives
existence and uniqueness of $G$, $\phi$, $\lambda$ satisfying
(\ref{eq:G}) and (\ref{eq:phi}) is applicable under Hypotheses
\ref{hyp:self-similar-frag}--\ref{hyp:p-at-borders}. We assume at
least these hypotheses throughout the paper in order to ensure the
existence of profiles.

Let us give some common examples of coefficients satisfying the above
assumptions:
\begin{description}
\item[Power coefficients] If we set
  \begin{equation*}
    b(x,y) = 2 x^{\gamma-1} \text{ for } x>y>0,
    \quad
    \tau(x) = x^\alpha \text{ for } x>0,
  \end{equation*}
  then all our hypotheses are satisfied when $\gamma - \alpha + 1 >
  0$ and $\alpha<2$. Observe that in this case $B(x) = x^\gamma$ and $p(z) \equiv 1$,
  which satisfies Hypotheses~\ref{hyp:self-similar-frag},
  \ref{hyp:p-at-borders} with $\mu = 1$ and $\nu = 0$, and also
  \ref{hyp:p-bounded}. Since $\tau(x)$ is a power, it satisfies
  Hypothesis~\ref{hyp:tau}.
  Hypotheses~\ref{hyp:asymptotics} and \ref{hyp:asymptotics-2nd} are
  also satisfied.
\item[Self-similar fragmentation] The previous case with $\tau(x)=x$
  is referred to as the \emph{self-similar fragmentation equation}. It
  is closely related to the fragmentation equation $\partial_t g_t =
  \LL(g_t)$ (see \cite{MR2114413,MR2832638}).
\item[Mitosis] Cellular division by equal mitosis is modeled by a
  distribution of fragments $p$ concentrated at a size equal to one
  half:
  \begin{equation*}
    p(z) = 2\,\delta_{z=\frac{1}{2}}.
  \end{equation*}
  This measure $p$ satisfies Hypothesis~\ref{hyp:p-at-borders} with
  $p_0 = p_1 = 0$ (the value of $\mu$, $\nu$ being irrelevant). In
  order to make the theory work, one has to choose $B$ and $\tau$ such
  that the rest of Hypotheses are satisfied. For instance, $B(x) =
  x^\gamma$ and $\tau(x) = x^\alpha$ with $\gamma - \alpha + 1 > 0$
  (and then defining $b(x,y)$ through \eqref{eq:b-ss}) are valid
  choices for the same reasons as before.

\end{description}
\vspace*{-10pt}
\subsection{Summary of main results}

\paragraph{Estimates on the profiles.}

We describe the asymptotics of the profile $G$ and give accurate
bounds on the eigenvector $\phi$. Define
\begin{equation*}
  %\label{eq:def-Lambda}
  \Lambda(x):=\int_1^x\frac{\lambda+B(y)}{\tau(y)}dy
\end{equation*}
and
\begin{equation*}
  %\label{eq:def-eta}
  \xi :=
  \begin{cases}
    p_1
    &\text{ if $\gamma > 0$ and $\nu = 0,$}
    \\
    p_1\frac{B_\infty}{\lambda+B_\infty}
    &\text{ if $\gamma=\nu=0$},
    \\
    0
    &\text{ if $\gamma \geq 0$ and $\nu > 0$, or $\gamma<0$ and $\nu>-1+\frac{\gamma+1-\alpha}{1-\alpha},$}
  \end{cases}
\end{equation*}
where the parameters are the ones appearing in the previous
hypotheses.  In Section~\ref{sec:est-G-infty} we prove the following
result, which improves previous estimates of the profile $G$ given in
\cite{MR2114413,MR2832638,MR2652618}
\begin{thm}
  \label{thm:G-at-infinity}
  Assume Hypotheses
  \ref{hyp:self-similar-frag}--\ref{hyp:asymptotics-2nd}. There exists
  $C>0$ such that
  \begin{equation}
    \label{eq:G-asymptotic-x-large}
    G(x) \underset{x \to +\infty}{\sim} C e^{-\Lambda(x)} x^{\xi-\alpha}.
  \end{equation}
\end{thm}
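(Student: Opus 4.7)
The equation (\ref{eq:G-eq}) is a first-order linear ODE for $H(x):=\tau(x) G(x)$:
\[ H'(x) + \frac{\lambda + B(x)}{\tau(x)}\,H(x) = \LL_+(G)(x). \]
Multiplying by the integrating factor $e^{\Lambda(x)}$ and integrating from $1$ to $x$ gives the representation
\[ e^{\Lambda(x)} H(x) = H(1) + \int_1^x e^{\Lambda(y)}\,\LL_+(G)(y)\,\d y. \]
Since $\tau(x)\sim\tau_\infty x^{\alpha}$, the theorem is equivalent to showing that the non-negative, non-decreasing quantity $I(x):=\int_1^x e^{\Lambda(y)}\,\LL_+(G)(y)\,\d y$ satisfies $I(x)\sim C' x^\xi$ as $x\to\infty$, for some $C'>0$.

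The heart of the proof is therefore a Laplace-type asymptotic for $\LL_+(G)(y)$ for large $y$. Using the self-similar form (\ref{eq:b-ss}), the change of variable $u=y/z$ yields
\[ \LL_+(G)(y) = \int_0^1 B(y/u)\,G(y/u)\,p(u)\,\frac{\d u}{u}. \]
Under a provisional ansatz $G(z)\approx C e^{-\Lambda(z)} z^{\xi-\alpha}$, the factor $e^{-\Lambda(y/u)}$ hidden in $G(y/u)$ concentrates the integrand near $u=1$. The linearisation $\Lambda(y/u)-\Lambda(y)\approx \tfrac{\lambda+B(y)}{\tau(y)}\,\tfrac{y(1-u)}{u}$ sets the effective peak width $\sim y^{\alpha-\gamma-1}$, and Hypotheses~\ref{hyp:p-at-borders} and~\ref{hyp:asymptotics-2nd} give $p(u)\sim p_1(1-u)^\nu$. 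Combining these ingredients via the Laplace-type lemmas of Appendix~\ref{sec:Laplace} and counting powers of $y$ yields an asymptotic of the form $e^{\Lambda(y)}\LL_+(G)(y)\sim C'' y^{\xi-1}$ when $\xi>0$, and an integrable decay when $\xi=0$. The three values of $\xi$ appearing in the statement correspond to three regimes of this Laplace calculation: the case $\gamma>0,\nu=0$ contributes the coefficient $p_1$ from $p$ evaluated at $1$; the case $\gamma=\nu=0$ picks up the extra factor $B_\infty/(\lambda+B_\infty)$ because now $B$ is bounded across the peak so $(\lambda+B(y))/\tau(y)$ matters in the prefactor; and the remaining cases give integrability of $e^{\Lambda(y)}\LL_+(G)(y)$, hence $\xi=0$ and a finite positive limit for $I(x)$. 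Once this sharp estimate is in hand, the result follows from the integrated identity: $I(x)\sim (C''/\xi)\,x^\xi$ when $\xi>0$, $I(x)\to C''$ when $\xi=0$, and multiplication by $1/\tau(x)\sim \tau_\infty^{-1} x^{-\alpha}$ gives (\ref{eq:G-asymptotic-x-large}).

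The argument must be closed by a bootstrap: rough polynomial two-sided bounds on $G$ (available from \cite{MR2652618,MR2114413}) are inserted into the Laplace expression to produce improved bounds on $\LL_+(G)$, hence on $I(x)$ and so on $G$, and iteration identifies the correct exponent $\xi-\alpha$. The main obstacle will be the uniform control of the Laplace asymptotic itself: the peak width $y^{\alpha-\gamma-1}$ need not tend to $0$ (for instance when $\gamma\le 0$), so one cannot use a naïve steepest-descent expansion and must instead rely on the precise second-order remainders $O(x^{\gamma-\delta})$, $O(x^{\alpha-\delta})$, $O((z-1)^{\nu+\delta})$ of Hypothesis~\ref{hyp:asymptotics-2nd} to justify replacing $B,\tau,p$ by their power-law models inside the peak with a remainder that is strictly lower-order than the leading term. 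A secondary subtlety is the boundary case $\gamma<0$ with $\nu$ close to the threshold $-1+(\gamma+1-\alpha)/(1-\alpha)$ appearing in the definition of $\xi$: checking that integrability of $e^{\Lambda(y)}\LL_+(G)(y)$ at infinity corresponds exactly to this threshold requires careful bookkeeping of every power of $y$ produced by the Laplace expansion and by the slowly varying $e^{\Lambda(y)}$.
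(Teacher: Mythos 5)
Your starting point coincides with the paper's: integrate the equation for $H(x)=\tau(x)G(x)$ with the integrating factor $e^{\Lambda(x)}$ to get $e^{\Lambda(x)}H(x)=H(1)+\int_1^x e^{\Lambda(y)}\mathcal{L}_+(G)(y)\,dy$, and you correctly identify the Laplace-type estimate near $z=1$ (together with the three regimes producing the three values of $\xi$) as the computational heart. The divergence is in how you propose to close the circularity in $\mathcal{L}_+(G)$: you plan to insert a pointwise ansatz $G(z)\approx C e^{-\Lambda(z)}z^{\xi-\alpha}$, extract a pointwise asymptotic for $e^{\Lambda(y)}\mathcal{L}_+(G)(y)$, and iterate from ``rough two-sided polynomial bounds on $G$''. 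This plan has a genuine gap at two levels. First, the cited references give \emph{moment} bounds on $G$ (e.g., finiteness of weighted integrals), not two-sided pointwise bounds $c_1 x^{\sigma_1}e^{-\Lambda}\le G\le c_2 x^{\sigma_2}e^{-\Lambda}$, so the input to your bootstrap is not actually available. Second, and more fundamentally, the exponent $\xi-\alpha$ is a \emph{neutral} fixed point of the Laplace iteration: inserting $G(z)\le C z^{\sigma}e^{-\Lambda(z)}$ into $\mathcal{L}_+(G)$, applying Lemma~\ref{lem:Laplace}, and dividing by $\tau(x)e^{\Lambda(x)}$ returns a bound with the \emph{same} exponent $\sigma$ and a multiplicative constant $\xi/(\sigma+\alpha)$, which equals $1$ at $\sigma=\xi-\alpha$. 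There is no contraction to run the iteration, and no mechanism for the bootstrap to identify $\xi-\alpha$ and the positive constant $C$.

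The paper avoids the circularity entirely. It first proves the moment estimate $\int_1^\infty G(x)e^{\Lambda(x)}x^{\alpha-m}\,dx<\infty$ for $m>1+\xi$ (Lemma~\ref{lem:exp-moments-G}) directly from the equation, with no ansatz on $G$. It then differentiates the \emph{damped} quantity $x^{-\xi}\tau(x)G(x)e^{\Lambda(x)}$ rather than $\tau G e^{\Lambda}$ itself; after an application of Fubini the derivative becomes
\[
\int_{x_0}^\infty G(x)\Bigl(\int_{x_0}^x y^{-\xi}b(x,y)e^{\Lambda(y)}\,dy-\xi x^{-\xi-1}\tau(x)e^{\Lambda(x)}\Bigr)dx,
\]
in which the inner integral is a \emph{deterministic} kernel (no $G$), so Lemma~\ref{lem:Laplace} applies directly and the leading term cancels the subtracted $\xi$-term; the $O(x^{\alpha-\xi-1-\delta})e^{\Lambda(x)}$ remainder is integrable against $G$ by the moment lemma. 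Hence $x^{-\xi}\tau G e^{\Lambda}$ has a finite limit, and a separate monotonicity/Gronwall argument gives positivity. This is a different and stronger mechanism than the bootstrap you outline: the extra factor $x^{-\xi}$ is chosen precisely so the leading Laplace contribution cancels, and the moment lemma (an integrated, not pointwise, bound) absorbs the remainder without any provisional ansatz. If you want to salvage your route, you would need (i) a priori pointwise two-sided bounds of the form $G\asymp x^{\sigma}e^{-\Lambda}$ with explicit $\sigma$, which the references do not supply, and (ii) an argument for the bootstrap that does not rely on contraction, since the fixed point is neutral; neither is addressed in your sketch.
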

This result works for all the examples given before. For all of them,
it shows that the profile $G$ decays exponentially for large sizes,
with a precise exponential rate given by $\Lambda(x)$. We observe that
$\Lambda(x)$ behaves like $x^{\gamma_+ - \alpha + 1}$ (with $\gamma_+
:= \max\{\gamma,0\}$), which is always a positive power of $x$. There
are some observations about this that match intuition: the equilibrium
profile decays faster when the total fragmentation rate is stronger
for large sizes, and it decays slower when the growth rate is larger
for large sizes. Also, it is interesting to notice that $\Lambda$ does
not depend on the fragment distribution (this is, $p$), but only on
the total fragmentation rate $B$.

The additional power $x^{\xi-\alpha}$ which gives a correction to the
exponential behavior, in turn, depends only on the behavior of the
distribution of fragments $p(z)$ close to $z=1$, this is, on
fragments of size close to the size of the particle that breaks. In
the mitosis case, for example, $\xi = 0$ since we obtain no fragments
of similar size when a particle breaks.

\medskip The behavior of $G(x)$ for $x$ close to $0$ depends on the
power $\alpha_0$ from Hypothesis~\ref{hyp:asymptotics} and the
distribution of small fragments that result when a particle
breaks. The following result is proven in Section~\ref{sec:G-at-zero}:

\begin{thm}
  \label{thm:G-at-zero}
  Assume Hypotheses
  \ref{hyp:self-similar-frag}--\ref{hyp:p-at-borders} with $p_0>0.$
  If $\alpha_0<1$, there exists $C>0$ such that
  $$G(x)\underset{x\to0}{\sim}C\,x^{\mu-\alpha_0}.$$
  If $\alpha_0\geq 1,$ there exists $C>0$ such that
  $$G(x)\underset{x\to0}{\sim}C\,x^{\mu-1}.$$
\end{thm}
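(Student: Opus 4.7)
The plan is to integrate the eigenvalue equation~\eqref{eq:G-eq} via an integrating factor to obtain an explicit representation of $\tau(x) G(x)$, and then extract the leading-order asymptotic as $x \to 0^+$. Since $\Lambda'(y) = (\lambda + B(y))/\tau(y)$, multiplying~\eqref{eq:G-eq} by $e^{\Lambda(y)}$ yields $(e^\Lambda \tau G)' = e^\Lambda\, \LL_+ G$. Integrating over $(0,x)$ and checking that $e^{\Lambda(y)} \tau(y) G(y) \to 0$ as $y \to 0^+$ (automatic when $\alpha_0 \geq 1$ since the assumption $\gamma_0 > \alpha_0 - 1 \geq 0$ forces $\Lambda(y) \to -\infty$ sufficiently fast, and a consequence of~\eqref{eq:G-boundary} combined with the finiteness of $\Lambda(0^+) = -\int_0^1 (\lambda+B)/\tau$ when $\alpha_0 < 1$) gives
\[
  \tau(x) G(x) \;=\; e^{-\Lambda(x)} \int_0^x e^{\Lambda(y)}\, \LL_+ G(y)\, dy .
\]

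The first main step is to prove
\[
  \LL_+ G(y) \underset{y\to 0}{\sim} p_0\, M\, y^{\mu-1}, \qquad M := \int_0^\infty B(z)\, z^{-\mu}\, G(z)\, dz .
\]
Using~\eqref{eq:p-at-0} in the form $p(s) = p_0\, s^{\mu-1} + o(s^{\mu-1})$, one splits
\[
  \LL_+ G(y) = \int_y^\infty B(z)\, z^{-1}\, p(y/z)\, G(z)\, dz
\]
into a leading piece $p_0\, y^{\mu-1}\!\int_y^\infty B(z) z^{-\mu} G(z)\, dz$ plus a remainder that is $o(y^{\mu-1})$ by dominated convergence, provided $M<\infty$. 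The decay at infinity is controlled by Theorem~\ref{thm:G-at-infinity}, while at $0$ one argues by bootstrap: an initial crude bound $G(z) = O(z^\beta)$ (coming from moment estimates on $G$ available in the existence theory of \cite{MR2652618} and fed back into the integral representation for $\tau G$) together with condition~\eqref{eq:gamma0-alpha0+1} yields integrability of $B(z) z^{-\mu} G(z)$ near $0$.

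For the second step, one evaluates $\int_0^x e^{\Lambda(y)} y^{\mu-1}\, dy$ asymptotically. When $\alpha_0 < 1$, $\Lambda$ has a finite limit at $0$, so $e^{\Lambda(y)} \to e^{\Lambda(0)}$ and the integral is equivalent to $e^{\Lambda(0)} x^\mu / \mu$; dividing by $\tau(x) e^{\Lambda(x)}$ and using $\tau(x) \sim \tau_0 x^{\alpha_0}$ yields $G(x) \sim \frac{p_0 M}{\mu \tau_0}\, x^{\mu-\alpha_0}$. When $\alpha_0 > 1$, Laplace's method applies: concentration occurs at $y=x$, and the substitution $u = \Lambda(x) - \Lambda(y)$ (so $du = -(\lambda+B(y))/\tau(y)\, dy$) gives
\[
  \int_0^x e^{\Lambda(y)} y^{\mu-1}\, dy \underset{x\to 0}{\sim} e^{\Lambda(x)}\, x^{\mu-1}\, \frac{\tau(x)}{\lambda + B(x)} ,
\]
and since $B(x)\to 0$ this leads to $G(x) \sim (p_0 M/\lambda)\, x^{\mu-1}$. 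In the borderline case $\alpha_0 = 1$ one instead has $e^{\Lambda(y)} \sim y^{\lambda/\tau_0}$ and a direct power-law computation yields $G(x) \sim \frac{p_0 M}{\tau_0 \mu + \lambda}\, x^{\mu-1}$, of the form claimed.

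I expect the main technical obstacle to lie in the bootstrap securing $M < \infty$, together with producing a uniform-in-$y$ integrable majorant for the integrand in Step~1 so that dominated convergence actually applies (one needs to estimate $B(z) z^{-1} p(y/z) G(z)$ by an integrable function independent of $y$). A secondary difficulty is the Laplace computation in the case $\alpha_0 > 1$: since the minimal regularity imposed by Hypotheses~\ref{hyp:self-similar-frag}--\ref{hyp:p-at-borders} (only continuity of $\tau$ and local integrability of $B$) precludes a direct integration by parts, one must instead exploit the substitution $u = \Lambda(x) - \Lambda(y)$ and carefully compare $y(u)$ with its first-order approximation $x - u\,\tau(x)/(\lambda+B(x))$ in order to harvest the sharp concentration at $y = x$.
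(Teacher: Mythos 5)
Your argument mirrors the paper's proof: the paper introduces $F(x):=\tau(x)G(x)e^{\Lambda(x)}$, shows $F'(x)=e^{\Lambda(x)}\LL_+G(x)$, proves $\LL_+G(y)\sim p_0\bigl(\int_0^\infty B(z)z^{-\mu}G(z)\,dz\bigr)y^{\mu-1}$ by splitting the integral at $z=y/\epsilon$ (so that the near-diagonal part $s=y/z\in[\epsilon,1]$ is bounded directly and contributes $O(y^{\gamma_0+\mu-\alpha_0})=o(y^{\mu-1})$, while dominated convergence applies on the remaining range where $p(s)\le Cs^{\mu-1}$), and then integrates, distinguishing $\alpha_0<1$ from $\alpha_0\ge1$ exactly as you do. The only point worth flagging is that your Step~1 decomposition $p=p_0s^{\mu-1}+o(s^{\mu-1})$ cannot be applied globally in the convolution since $p$ is only a measure on $(0,1]$ — the cutoff at $z=y/\epsilon$ is how the paper makes this rigorous — and your separate treatment of the borderline $\alpha_0=1$ via $e^{\Lambda(y)}\sim y^{\lambda/\tau_0}$ is in fact cleaner than the paper's, which lumps it formally into the $e^{-C_2x^{1-\alpha_0}}$ asymptotics.
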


This shows that $G$ is (roughly) more concentrated close to $0$ the
weaker the growth is for smaller sizes; and is less concentrated when
there are fewer smaller fragments resulting from breakage. This result
includes cases in which $G(x)$ blows up as $x \to 0$, cases in which
it behaves like a constant, and cases in which it tends to $0$ like a
power. We recall that the boundary condition is $\tau(x) G(x) \to 0$
as $x \to 0$, which is always ensured by $\mu > 0$ from Hypothesis
\ref{hyp:p-at-borders}. % (see below).

\medskip
For the profile $\phi$ we derive the following estimates, proved in
section \ref{sec:phi}, by the use of a maximum principle
(Lemma~\ref{lem:maxprinciple}):

\begin{thm}
  \label{thm:allbounds}
  Assume Hypotheses~\ref{hyp:self-similar-frag}--\ref{hyp:p-at-borders}.
  If $\gamma>0$, there are two positive
  constants $C_1$ and $C_2$ such that
  \begin{equation}\label{eq:bounds-gamma-positive}
    C_1\, x\leq \phi(x)\leq C_2\, x,\qquad \forall x>1.
  \end{equation}
  If $\gamma<0$
  and under the additional assumption that $\mu=1$ and $p_0>0$ in Hypothesis~\ref{hyp:p-at-borders},
  there exist two positive constants $C_1$ and $C_2$ such that
  \begin{equation}\label{eq:bounds-gamma-negative}
    C_1\, x^{\gamma-1}
    \leq \phi(x)
    \leq C_2\, x^{\gamma-1},\qquad\forall x>1.
  \end{equation}
\end{thm}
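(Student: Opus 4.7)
The plan is to invoke Lemma~\ref{lem:maxprinciple}, a comparison principle for the dual eigenequation~\eqref{eq:phi-eq}, with explicit power-type barrier functions. The underlying algebraic identity is that, by the self-similar form~\eqref{eq:b-ss},
$$\LL_+^*(y\mapsto y^\beta)(x)=B(x)\,\pi_\beta\,x^\beta,\qquad \pi_\beta:=\int_0^1 z^\beta p(z)\,dz,$$
(valid whenever $\pi_\beta<+\infty$), so the dual operator applied to a power is
$$\mathcal{E}[x^\beta]:=-\tau(x)\beta x^{\beta-1}+\bigl(B(x)(1-\pi_\beta)+\lambda\bigr)x^\beta.$$
Since $\pi_\beta$ is strictly decreasing with $\pi_1=1$ (Remark~\ref{rem:b-moment-bounds}), the sign of $\mathcal{E}[x^\beta]$ at infinity is governed by the position of $\beta$ relative to $1$ and by the sign of $\gamma$. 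This identity is my main tool.

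For $\gamma>0$: since $\pi_1=1$, the ansatz $\bar\phi(x)=Cx$ gives $\mathcal{E}[\bar\phi]=C(\lambda x-\tau(x))$, positive for large $x$ when $\alpha<1$. In that regime, choosing $C$ large so that $\bar\phi\geq\phi$ on an initial interval $[0,x_0]$, Lemma~\ref{lem:maxprinciple} yields $\phi(x)\leq C_2 x$ on $[1,\infty)$. When $\alpha\in[1,\gamma+1)$ the term $-\tau(x)$ becomes negative and must be compensated, so I would use a perturbed barrier $\bar\phi(x)=C_1 x+C_2 x^\beta$ with $1<\beta<\gamma+1$. By the identity, the extra contribution is $B(x)(1-\pi_\beta)C_2 x^\beta\sim B_\infty(1-\pi_\beta)C_2 x^{\gamma+\beta}$, strictly positive since $\pi_\beta<1$, and it dominates $-C_1\tau(x)\sim -C_1\tau_\infty x^\alpha$ at infinity thanks to $\gamma-\alpha+1>0$. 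The lower bound $\phi(x)\geq C_1 x$ is obtained by a symmetric subsolution.

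For $\gamma<0$ (with $\mu=1$, $p_0>0$): here $B(x)\to 0$ and the non-local term dominates the large-$x$ balance. The correct scale is pinned down by observing that, for any bounded continuous $\psi$ on $(0,1)$ extended as $C\,y^{\gamma-1}$ on $[1,\infty)$, the change of variables $z=y/x$ together with $p(z)\to p_0$ as $z\to 0$ gives
$$\LL_+^*\psi(x)=\frac{B(x)}{x}\int_0^x p(y/x)\,\psi(y)\,dy\sim \frac{p_0\,B_\infty}{|\gamma|}\,C\,x^{\gamma-1}\quad\text{as }x\to\infty,$$
which is of the same order as $\lambda\psi(x)=\lambda C x^{\gamma-1}$. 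I would therefore use barriers of the form $C_2 x^{\gamma-1}$ on $[1,\infty)$, extended appropriately on $(0,1)$, and apply Lemma~\ref{lem:maxprinciple} to bracket $\phi$ by $C_1 x^{\gamma-1}$ and $C_2 x^{\gamma-1}$.

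The main obstacle I anticipate is that the non-local operator $\LL_+^*$ couples values of $\phi$ across all of $(0,x)$, so power barriers cannot be plugged in globally (in the $\gamma<0$ case, naively $\pi_{\gamma-1}=+\infty$ under $\mu=1$). The extension of the barriers to $(0,1)$ must be designed so that the supersolution (respectively subsolution) inequality holds on the whole half-line, and one must also match the constants at $x=1$ using the finiteness and positivity of $\phi$ there. This matching between local behavior near $0$ and targeted decay at $+\infty$ is the point where the positivity $p_0>0$ together with $\mu=1$ in Hypothesis~\ref{hyp:p-bounded} becomes indispensable.
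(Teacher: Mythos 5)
Your algebraic identity $\LL_+^*(y\mapsto y^\beta)(x)=B(x)\pi_\beta x^\beta$ and the use of the monotonicity of $\beta\mapsto\pi_\beta$ are exactly the right tools, and the broad strategy (barriers plus Lemma~\ref{lem:maxprinciple}) is the one the paper uses. However, there are two concrete gaps in the $\gamma>0$ case and a missing construction in the $\gamma<0$ case.

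First, every barrier you propose for the upper bound in the case $\gamma>0$ vanishes at $x=0$: $v(x)=Cx$ and $v(x)=C_1x+C_2x^\beta$ (with $\beta>1$) both satisfy $v(0)=0$. But $\phi$ can be strictly positive at the origin — indeed by Theorem~\ref{thm:phi-at-zero}, $\phi(x)\to Ce^{\Lambda(0)}>0$ as $x\to0$ whenever $\alpha_0<1$ or $\gamma_0\leq0$ — and Lemma~\ref{lem:maxprinciple} requires the comparison function $w=Kv-\phi_L$ to be nonnegative on the \emph{entire} interval $[0,A]$, including $x=0$. No choice of $K$ can fix $Kv(0)-\phi_L(0)=-\phi_L(0)<0$. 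The paper handles this by adding a constant: the supersolution there is $v(x)=Cx+1-x^k$, which is bounded below by a positive constant for $C$ large.

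Second, and more structurally, the perturbation $+C_2x^\beta$ with $\beta>1$ goes the wrong way. It does yield a supersolution (your sign bookkeeping is correct, since $\pi_\beta<1$), but the barrier then behaves like $C_2x^\beta$ at infinity, so the maximum principle only gives $\phi(x)\lesssim x^\beta$ with $\beta>1$ — strictly weaker than the target $\phi(x)\lesssim x$. The paper instead \emph{subtracts} a lower power: $v(x)=Cx+1-x^k$ with $\max(0,\alpha-\gamma)<k<1$. Because $k<1$ one has $\pi_k>1$ (Remark~\ref{rem:b-moment-bounds}), so the nonlocal contribution of the term $-x^k$ is $+(\pi_k-1)B(x)x^k\sim B_\infty(\pi_k-1)x^{\gamma+k}>0$, which is the dominant term at infinity (since $\gamma+k>\alpha$), while the barrier itself remains asymptotically linear. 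The subtraction is not optional even for $\alpha<1$: adding the constant $+1$ introduces the negative term $-(\pi_0-1)B(x)\sim-B_\infty(\pi_0-1)x^\gamma$, which beats $C\lambda x$ whenever $\gamma>1$ unless it is compensated by the $x^{\gamma+k}$ term.

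For $\gamma<0$ your intuition is correct and you identified the right obstacle (the divergence of $\pi_{\gamma-1}$ forces a regularization near $0$), but the proposal stops short of a construction. The paper's choice is $v(x)=(\eta+x)^{\gamma-1}$ with $\eta>\bigl(\tfrac{-\gamma\lambda}{B_\infty p_0}\bigr)^{1/\gamma}$ for the upper bound — chosen so that $\mathcal S v(x)\sim\bigl(\lambda-\tfrac{B_\infty p_0\eta^\gamma}{-\gamma}\bigr)x^{\gamma-1}>0$ — and the piecewise barrier $v(x)=(x-\epsilon)_+x^{\gamma-2}$ with $\epsilon$ small for the lower bound. Also note that your asymptotic for $\LL_+^*\psi$ drops the contribution $p_0\int_0^1\psi$ coming from the part of the integral where $p(y/x)\to p_0$ but $\psi(y)$ is merely bounded; this term is of the same order $x^{\gamma-1}$ and cannot be discarded, which is precisely why the barrier must be chosen globally (the paper's $(\eta+x)^{\gamma-1}$ has $\int_0^\infty(\eta+y)^{\gamma-1}\,dy=\eta^\gamma/|\gamma|$, producing the clean constant $B_\infty p_0\eta^\gamma/|\gamma|$ and making the $\eta$-dependence tractable).
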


Estimates of $\phi$ are significantly harder than those of $G$, and
they have to be obtained through comparison arguments. To our
knowledge, this is the first result in which $\phi$ can be bounded
above and below by the same power (except for the cases in which
$\phi$ can be found explicitly). This improves the results in
\cite{MR2832638} also in that it is valid for a general power-law behavior
of $\tau$.

We do not include the case $\gamma=0$ in the above theorem (this is,
$B(x)$ asymptotic to a constant as $x \to +\infty$), but we
remark that in the case of $B(x)$ \emph{equal} to a constant (and with
the very mild condition that $\int b(x,y) \,dy$ is equal to a constant
independent of $x$), then $\phi \equiv 1$. The case $\tau(x) = \tau_0
x$ is also explicit: in that case, $\lambda = \tau_0$ and $\phi(x) = C
x$ for some number $C > 0$.

\medskip
As for the behavior at zero, we prove the following result:
\begin{thm}
  \label{thm:phi-at-zero}
  Assume
  Hypotheses~\ref{hyp:self-similar-frag}--\ref{hyp:p-at-borders}. Then
  there exists a constant $C>0$ such that
  \begin{equation*}
    %\label{eq:phi_zero}
    \phi(x)\underset{x\to0}{\sim} Ce^{\Lambda(x)}.
  \end{equation*}
\end{thm}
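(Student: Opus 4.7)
The plan is to use the integrating factor $\psi(x) := e^{-\Lambda(x)}\phi(x)$. Since $\tau\Lambda' = \lambda + B$ by definition of $\Lambda$, and using the self-similar form~\eqref{eq:b-ss} of $b$ together with the change of variable $z = y/x$ in $\LL_+^*\phi$, a direct computation transforms the equation~\eqref{eq:phi-eq} into the Volterra-type identity
\[
\psi'(x) \;=\; -\frac{e^{-\Lambda(x)}}{\tau(x)}\LL_+^*\phi(x) \;=\; -\frac{B(x)}{\tau(x)}\int_0^1 p(z)\,e^{\Lambda(xz)-\Lambda(x)}\,\psi(xz)\,dz.
\]
Since $\LL_+^*\phi \geq 0$, this gives $\psi' \leq 0$, so $\psi$ is non-increasing on $(0,1]$ and the limit $C := \lim_{x\to 0^+}\psi(x)$ exists in $[\phi(1),+\infty]$. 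The lower bound $C \geq \phi(1) > 0$ is therefore immediate (strict positivity of $\phi$ is standard from the eigenvalue problem), and the substance of the theorem reduces to showing $C < \infty$.

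For that upper bound, I would integrate the identity above from $x$ to some $\epsilon \in (0,1]$ and invoke the two trivial pointwise estimates $e^{\Lambda(xz)-\Lambda(x)} \leq 1$ (since $\Lambda$ is increasing and $xz \leq x$) and $\int_0^1 p(z)\,dz = \pi_0 < \infty$, together with the monotonicity of $\psi$ to replace $\psi(sz)$ by $\sup_{(0,\epsilon]}\psi$. This produces
\[
\psi(x) \;\leq\; \psi(\epsilon) + K_\epsilon \sup_{(0,\epsilon]}\psi, \qquad K_\epsilon := \pi_0\int_0^\epsilon \frac{B(s)}{\tau(s)}\,ds,
\]
for every $x \in (0,\epsilon]$. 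By Hypothesis~\ref{hyp:asymptotics} and in particular condition~\eqref{eq:gamma0-alpha0+1}, the integrand $B/\tau$ behaves like $s^{\gamma_0-\alpha_0}$ near $0$ and is integrable, so $K_\epsilon \to 0$ as $\epsilon \to 0$. Choosing $\epsilon$ small enough that $K_\epsilon \leq 1/2$ and taking the supremum over $x \in (0,\epsilon]$ would yield $\sup_{(0,\epsilon]}\psi \leq 2\psi(\epsilon) < \infty$.

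The main obstacle is that this last Gronwall step is a priori circular: if $\sup_{(0,\epsilon]}\psi = +\infty$, the inequality reduces to $\infty \leq \infty$ and conveys nothing. To close the loop I would apply the argument not to $\psi$ directly, but to the regularized profiles $\phi_n$ built in Appendix~\ref{sec:app:approximation}, which are bounded by construction; the bound $\sup\psi_n \leq \psi_n(\epsilon)/(1-K_\epsilon)$ then holds rigorously, and the convergence $\phi_n \to \phi$ together with the uniformity in $n$ of the right-hand side transfers the bound to $\psi$. Combined with the already established $C \geq \phi(1) > 0$, this gives $\phi(x) = e^{\Lambda(x)}\psi(x) \sim C\,e^{\Lambda(x)}$ as $x \to 0^+$.
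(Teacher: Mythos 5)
Your approach is the same as the paper's: pass to $\psi = \phi\,e^{-\Lambda}$, observe $\psi' \leq 0$, close a Gronwall-type estimate using $\int_0^{x_0} B/\tau < \infty$ (via $\gamma_0 - \alpha_0 + 1 > 0$), and regularize to make the a~priori bound meaningful. The paper's proof has exactly this structure, including the same constant $\pi_0\int_0^{x_0} B/\tau$ playing the role of your $K_\epsilon$.

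One step in your closing paragraph misattributes the source of the a~priori bound, and this is worth flagging. The finiteness of $\sup \psi_n$ does \emph{not} follow from boundedness of $\phi_n$: the potentially dangerous factor is $e^{-\Lambda(x)}$, which blows up as $x\to 0$ whenever $\alpha_0 \geq 1$, since then $\int_0^1 \lambda/\tau(y)\,dy = +\infty$ while $B/\tau$ stays integrable by \eqref{eq:gamma0-alpha0+1}. A bounded $\phi_n$ multiplied by an unbounded $e^{-\Lambda}$ gives nothing. The regularization the paper actually invokes is the replacement of $\tau$ by $\tau_\eta$ (equal to the constant $\eta$ on $(0,\eta)$, defined in \eqref{eq:tau_eta}), which makes $\Lambda_\eta(0)$ \emph{finite}; one then works with $\psi_\eta := \phi_\eta e^{-\Lambda_\eta}$ rather than $\phi_\eta e^{-\Lambda}$. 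With that replacement $\sup\psi_\eta = \psi_\eta(0) < \infty$ automatically (a bounded function times a bounded exponential), and your fixed-point inequality $(1-K_\epsilon)\sup\psi_\eta \leq \psi_\eta(\epsilon)$ is genuinely informative and passes to the limit $\eta\to 0$ by local uniform convergence. So the idea is right and the appendix you cite does contain the needed tool, but the reason the circularity is broken is the truncation of $\Lambda$, not the boundedness of $\phi_n$.
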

We remark that the behavior of $\Lambda(x)$ for small $x$ is
determined by whether $(B(x) + \lambda)/\tau(x)$ is integrable close
to $x=0$. Since $B(x)/\tau(x)$ is always integrable close to $x=0$ by
hypothesis (as $\gamma_0 - \alpha_0 > -1$), we deduce that:
\begin{enumerate}
\item If $\gamma_0 \leq 0$, then $\phi(x)$ tends to a positive constant
  as $x \to 0$.
\item If $\gamma_0 > 0$, then there are three possible cases:
  \begin{enumerate}
  \item If $\alpha_0 < 1$, then again $\phi(x)$ tends to a positive constant
    as $x \to 0$.
  \item If $\alpha_0 = 1$, then $\phi(x)$ behaves like a positive
    power of $x$ as $x \to 0$.
  \item If $\alpha_0 > 1$, then $\phi(x)$ decays exponentially fast as
    $x \to 0$.
  \end{enumerate}
\end{enumerate}

\paragraph{Spectral gap.}
The estimates of the previous theorems allow us to prove a spectral gap inequality.
The \emph{general relative
  entropy principle} \cite{MR2065377,MR2162224} applies here
and we have
\begin{multline*}
\frac{d}{dt}\int_{0}^{\infty}\phi(x)G(x)H(u(x))\,dx=\int_{0}^{\infty}\int_{y}^{\infty}\phi(y)b(x,y)	G(x)\\
\times \left( H(u(x))-H(u(y))+H'(u(x))(u(y)-u(x))\right)\,dx\,dy,
\end{multline*}
where $H$ is any function and we denote
\begin{equation*}
  u(x):=\frac{g(x)}{G(x)}\quad (x>0).
\end{equation*}
In the particular case of $H(x):=(x-1)^2$ we define
\begin{gather}
  \label{entropydefinition}
  H[g|G]:=\int_{0}^{\infty} \phi \, G(u-1)^2\,dx
  \\
  \label{entropydissipation}
  D[g|G]
  :=
  \int_{0}^{\infty}\int_{x}^{\infty}\phi(x)G(y)b(y,x)(u(x)-u(y))^2\,dy\,dx,
\end{gather}
and obtain that
\begin{equation*}
\frac{d}{dt}H[g|G]=-D[g|G]\leq 0.
\end{equation*}
The next result shows that $H$ is in fact bounded by a constant
times $D$:
\begin{thm}\label{thm:entropy_inequality}
  Assume that the coefficients satisfy
  Hypotheses~\ref{hyp:self-similar-frag}--\ref{hyp:p-bounded} with one
  of the following additional conditions on the exponents $\gamma_0$
  and $\alpha_0$:
\begin{itemize}
 \item either $\alpha_0>1$,
 \item or $\alpha_0=1$ and $\gamma_0\leq1+\lambda/\tau_0,$
 \item or $\alpha_0<1$ and $\gamma_0\leq2-\alpha_0.$
\end{itemize}
Consider also that we are in the case $\gamma\neq0.$
  Then the following inequality holds
  \begin{equation}\label{entropy1}
    H[g|G] \le C D[g|G],
  \end{equation}
  for some constant $C>0$ and for any nonnegative measurable function
  $g:(0,\infty)\to\mathbb{R}$ such that $\int \phi g=1$. Consequently,
  if $g_t$ is a solution of problem \eqref{eq:growth-frag} the speed
  of convergence to equilibrium is exponential in the $L^2$-weighted
  norm\\ \mbox{$\|\cdot\|=\|\cdot\|_{L^2(G^{-1}\phi dx)}$,} i.e.,
  \begin{equation*}
    H[g_t|G]
    \leq
    H[g_0|G] \, e^{-C t}
    \quad \text{ for $t \geq 0$}.
  \end{equation*}
\end{thm}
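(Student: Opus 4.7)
The strategy is to adapt the general relative entropy approach of~\cite{MR2832638} to the more general coefficients treated here, the main task being to prove the Poincar\'e-type inequality~\eqref{entropy1}; the final claim on exponential convergence is then immediate, since $\frac{d}{dt} H[g_t|G] = -D[g_t|G] \leq -H[g_t|G]/C$ and Gronwall's lemma give $H[g_t|G] \leq H[g_0|G] e^{-t/C}$.

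I would begin by noting that, since $\int_0^\infty \phi G \, dx = 1$ by the normalization of $\phi$ and $G$, and $\int_0^\infty \phi G u \, dx = \int_0^\infty \phi g \, dx = 1$ by assumption, $H[g|G]$ is exactly the variance of $u = g/G$ with respect to the probability measure $\phi G \, dx$, so~\eqref{entropy1} is a weighted Poincar\'e inequality. The lower bound $p \geq \underline p$ from Hypothesis~\ref{hyp:p-bounded} yields $b(y,x) \geq \underline p \, B(y)/y$ for $0 < x < y$, and hence
\[
  D[g|G] \geq \underline p \int_0^\infty \int_0^y \phi(x) G(y) \frac{B(y)}{y} (u(x) - u(y))^2 \, dx \, dy,
\]
so it suffices to bound the variance of $u$ by this simpler double integral involving only $\phi$, $G$ and $B$.

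For that, I would follow a two-step averaging argument as in~\cite{MR2832638}: introduce a local mean of the form $\bar u(y) := \bigl(\int_0^y w(x) u(x) \, dx\bigr) / \int_0^y w(x) \, dx$ with a weight $w$ to be chosen (e.g.\ $w = \phi$ or $w = G$), and split
\[
  H[g|G] \leq 2 \int_0^\infty \phi G (u - \bar u)^2 \, dx + 2 \int_0^\infty \phi G (\bar u - 1)^2 \, dx.
\]
The first term, encoding the local oscillation of $u$, should be controlled by the above double integral through Cauchy--Schwarz applied to the representation $u(y) - \bar u(y) = \bigl(\int_0^y w(x)(u(y)-u(x)) \, dx\bigr)/\int_0^y w$, provided one verifies that the resulting weights are dominated pointwise by $C B(y)/y$; the second term, depending only on the one-variable function $\bar u$, reduces to a one-dimensional Hardy-type inequality using $\bar u'(y) = w(y)(u(y) - \bar u(y))/\int_0^y w$. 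Both ingredients rely crucially on the precise asymptotics of $G$ and $\phi$ from Theorems~\ref{thm:G-at-infinity}--\ref{thm:phi-at-zero}, and the hypothesis $\gamma \neq 0$ is what selects the applicable case of Theorem~\ref{thm:allbounds} at infinity.

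The main technical obstacle will be the boundary behavior as $x \to 0$: the three sub-conditions on $(\alpha_0, \gamma_0)$ correspond exactly to the three regimes for $\phi(x)$ near $0$ listed after Theorem~\ref{thm:phi-at-zero} (constant limit, power-law, and exponential decay), and in each of them the Hardy step requires a careful balance between the weights $\phi(x) G(x)$, $B(x)/x$ and $\int_0^x w$ so that the boundary integrals at $0$ converge. When $\alpha_0 > 1$ the exponentially small $\phi$ near $0$ does most of the work and no extra ceiling on $\gamma_0$ is needed; in the borderline case $\alpha_0 = 1$ the critical exponent turns out to be $\gamma_0 \leq 1 + \lambda/\tau_0$, while for $\alpha_0 < 1$ the corresponding computation yields $\gamma_0 \leq 2 - \alpha_0$, which is exactly the case distinction of the theorem. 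Getting the weights to close in all three regimes, simultaneously with the behavior at infinity controlled by Theorem~\ref{thm:allbounds}, is where the bulk of the work will go.
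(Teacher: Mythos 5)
Your strategy differs substantially from the paper's. Rather than introducing a local mean $\bar u$ and reducing to a Hardy inequality, the paper invokes Lemma~2.1 of \cite{MR2832638}, which rewrites $H[g|G]$ as the symmetric double integral
\[
  D_2[g|G]=\int_0^\infty\int_x^\infty \phi(x)G(x)\phi(y)G(y)\bigl(u(x)-u(y)\bigr)^2\,dy\,dx
\]
(this is the integrated form of the variance identity you cite). The comparison $H=D_2\leq CD$ is then carried out by bounding the integrands directly: the key pointwise estimate is $G(x)\phi(y)\leq K\,b(y,x)$, established in Lemma~\ref{lem:bounds-Kbxy}. For $\gamma<0$ this inequality holds for all $0<x<y$ and the proof closes in one line. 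For $\gamma>0$ it only holds where $y$ is bounded or comparable to $x$; the far-off-diagonal piece $D_{2,2}$ (where $y>Rx$ and $y$ is large) is controlled by applying Lemma~2.2 of \cite{MR2832638}, whose structural hypotheses \eqref{const1}--\eqref{const3} are verified in a separate lemma, to replace the weight $\phi(x)G(x)\phi(y)G(y)$ by $y^{-1}\phi(x)G(y)$, after which $y^{-1}\leq K\,b(y,x)$ closes again. So the paper's decomposition is a split of the \emph{domain of integration} into near- and far-diagonal regions, not a split of $u-1$ into $(u-\bar u)+(\bar u-1)$.

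Concerning your plan: the local-average-plus-Hardy route is a legitimate alternative strategy for Poincar\'e inequalities, but as sketched it has a genuine gap. Bounding the local-oscillation term by the entropy dissipation after Cauchy--Schwarz requires, pointwise in $0<x<y$,
\[
  \phi(y)G(y)\,\frac{w(x)}{\int_0^y w}\ \leq\ C\,\phi(x)G(y)\,\frac{B(y)}{y},
\]
and with either of the natural choices $w=\phi$ or $w=G$ this fails near $y=0$. For instance with $w=\phi$, in the regime $\alpha_0<1$ where $\phi$ tends to a positive constant and $\int_0^y\phi\sim cy$, the inequality reduces to $y^{-1}\leq C\,y^{\gamma_0-1}$, i.e.\ $\gamma_0\leq 0$, which is much more restrictive than the theorem's condition $\gamma_0\leq 2-\alpha_0$. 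You would need a substantially cleverer weight $w$ (or a non-symmetric Cauchy--Schwarz with additional compensating factors), and the Hardy step for $(\bar u-1)^2$ also threatens circularity since $\bar u'$ reintroduces $u-\bar u$. None of this is resolved in your proposal, so the argument cannot yet be judged correct. The paper's pointwise-comparison-plus-domain-split avoids the issue entirely, precisely because the estimate $G(x)\phi(y)\leq Kb(y,x)$ is tailored to be checkable directly from Theorems~\ref{thm:G-at-infinity}--\ref{thm:phi-at-zero} and Hypothesis~\ref{hyp:p-bounded}.
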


Remark that in general we do not know the value of the eigenvalue
$\lambda$ which appears in the assumption on $\gamma_0$ for the case
$\alpha_0=1.$ Nevertheless in the case of the self-similar
fragmentation equation (\emph{i.e.} $\tau(x)\equiv\tau_0x$) we know by
integration of equation~\eqref{eq:G-eq} multiplied by $x$ that
$\lambda = \tau_0$ and the condition on $\gamma_0$ becomes $\gamma_0
\leq 2.$ Thus Theorem~\ref{thm:entropy_inequality} includes the result
of the first part of \cite[Theorem~1.9]{MR2832638}.

The main restrictions on the coefficients needed for Theorem
\ref{thm:entropy_inequality} to hold are the following. First, we
require Hypothesis \ref{hyp:p-bounded}, which says that the fragment
distribution $p$ should be bounded below. Consequently, this does not
include the mitosis case and other cases in which the fragment
distribution has ``gaps''; we refer to \cite{MR2536450} for a proof
that exponential decay does hold in that case, at least for a constant
total fragmentation rate. Second, the exponent $\gamma_0$ cannot be
too large in order to ensure that the term $b(x,y)$ which appears in
the entropy dissipation is not too small and can be bounded below by
our methods. (An exception to this is the case $\alpha_0 > 1$: in this
case $\phi(x)$ decays exponentially fast as $x \to 0$, and this allows
us to remove the upper bound on $\gamma_0$.) This restriction on
$\gamma_0$ might be a shortcoming of the arguments we are using; we do
not know if there is a spectral gap when it is removed.

On the other hand, it is remarkable that Theorem
\ref{thm:entropy_inequality} does not place any restrictions on the
behavior of the fragmentation or growth coefficients for large
sizes. This is a significant improvement over \cite{MR2832638}, where
the behavior at $0$ and $+\infty$ of the coefficients was taken to be
the same power of $x$, and results were restricted to the cases in
which $\tau$ is constant or linear.

\section{Estimates of the profile $G$}
\label{sec:G}

\subsection{Estimates of the moments of $G$}
\label{sec:G-moments}

When Hypothesis~\ref{hyp:asymptotics} is satisfied, we define
\begin{equation}\label{eq:zeta}
  \zeta:=\lim_{x\to+\infty}x^{\alpha-\gamma_+}\frac{B(x)+\lambda}{\tau(x)}
  =
  \frac{\lambda\1_{\gamma\leq0}+B_\infty\1_{\gamma\geq0}}{\tau_\infty}
  =
  \begin{cases}
    \frac{B_\infty}{\tau_\infty} &\text{ if $\gamma > 0,$}
    \\
    \frac{\lambda+B_\infty}{\tau_\infty} &\text{ if $\gamma=0,$}
    \\
    \frac{\lambda}{\tau_\infty} &\text{ if $\gamma<0$,}
  \end{cases}
\end{equation}
where $\gamma_+=\max\{0,\gamma\}.$
Remark that, for $\gamma\geq0,$ we have the relation
\begin{equation}
  \label{eq:zeta_xi}\xi=p_1\frac{B_\infty}{\tau_\infty}\zeta^{-1}.
\end{equation}

\begin{lem}
  \label{lem:exp-moments-G}
  Assume Hypotheses
  \ref{hyp:self-similar-frag}--\ref{hyp:p-at-borders}. For any $m > 1
  + \xi$ it holds that
  \begin{equation*}
    %\label{eq:exp-moment-finite}
    \int_1^\infty G(x)\, e^{\Lambda(x)}\, x^{\alpha - m} \,dx
    < + \infty.
  \end{equation*}
\end{lem}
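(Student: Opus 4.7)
The plan is to test the stationary equation \eqref{eq:G-eq} against the weight $\psi(x):=e^{\Lambda(x)}\,x^{1-m}$, chosen so that $(\lambda+B)\psi - \tau\psi' = (m-1)\,\tau\,e^{\Lambda}x^{-m}$ reproduces exactly the integrand we want to control. Multiplying \eqref{eq:G-eq} by $\psi$ and integrating by parts on $[A,R]$ gives the identity
\[
(m-1)\int_A^R \tau(x)G(x)e^{\Lambda(x)}x^{-m}\,dx + \tau(R)G(R)e^{\Lambda(R)}R^{1-m} = \int_A^R \LL_+G(x)\,e^{\Lambda(x)}x^{1-m}\,dx + \tau(A)G(A)e^{\Lambda(A)}A^{1-m},
\]
and since $\tau(x)\sim\tau_\infty x^\alpha$ at infinity, a uniform-in-$R$ bound on the left-hand integral is equivalent to the conclusion of the lemma.

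Next I would unfold the $\LL_+$-integral by Fubini and the rescaling $z=x/y$, arriving at
\[
\int_A^R \LL_+G(x)\,e^{\Lambda(x)}x^{1-m}\,dx \;\le\; \int_A^\infty G(y)B(y)y^{1-m}e^{\Lambda(y)}\,J(y)\,dy,\quad J(y):=\int_{A/y}^{1}p(z)\,e^{-[\Lambda(y)-\Lambda(yz)]}z^{1-m}\,dz.
\]
The key input, to be extracted from the Laplace-type asymptotics in Appendix~\ref{sec:Laplace}, is the limit
\[
\frac{B(y)\,y\,J(y)}{\tau(y)}\xrightarrow[y\to+\infty]{} \xi.
\]
The exponential factor concentrates $J$ on a window of width $\sim y^{-(\gamma_+-\alpha+1)}$ around $z=1$; the three cases in the definition of $\xi$ correspond exactly to the Laplace regimes determined by the local behavior $p(z)\sim p_1(1-z)^\nu$ and the prefactor $\zeta$ (giving the boundary mass $p_1$ when $\nu=0$ and $\gamma>0$, the modulated value $p_1 B_\infty/(\lambda+B_\infty)$ when $\gamma=\nu=0$, and $\xi=0$ otherwise).

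Fix $m>1+\xi$, choose $\epsilon>0$ with $m-1-\xi-\epsilon>0$, then pick $A$ large enough that $B(y)\,y\,J(y)/\tau(y)\le \xi+\epsilon$ for all $y\ge A$. Splitting the right-hand integral at $y=R$ turns its first piece into at most $(\xi+\epsilon)$ times the left-hand integral, leaving
\[
(m-1-\xi-\epsilon)\int_A^R \tau(y)G(y)e^{\Lambda(y)}y^{-m}\,dy \;\le\; \mathrm{tail}(R) + C,
\]
where $\mathrm{tail}(R)\le(\xi+\epsilon)\int_R^\infty \tau G e^{\Lambda}y^{-m}dy$ and we have dropped the nonnegative boundary term at $R$. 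The tail is handled by a preliminary polynomial bound $\tau(y)G(y)e^{\Lambda(y)}\le C\,y^{\xi+\epsilon}$, obtained from the ODE $(\tau G e^{\Lambda})'=e^{\Lambda}\LL_+G$ by a simpler application of the same Laplace estimate; this guarantees that the tail is dominated by the main integral on the left and can be absorbed. Letting $R\to+\infty$ then concludes the proof.

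The main technical obstacle is the Laplace asymptotic for $J(y)$: one must track the precise balance between the shrinking exponential localization scale $y^{-(\gamma_+-\alpha+1)}$ and the boundary behavior $p(z)\sim p_1(1-z)^\nu$, and ensure the resulting estimate is uniform in $y$ throughout the whole range $y\ge A$. The delicate accounting of the three cases in Hypothesis~\ref{hyp:p-at-borders} combined with the dichotomy $\gamma\gtreqless 0$ is what forces the precise definition of $\xi$, and is exactly the content that will be deferred to Appendix~\ref{sec:Laplace}.
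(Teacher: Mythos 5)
Your main calculation is exactly the paper's. You test the eigenequation against $\psi(x)=e^{\Lambda(x)}x^{1-m}$, integrate by parts, rewrite the $\LL_+$--term by Fubini and the rescaling $z=x/y$, and use the Laplace asymptotics of Appendix~\ref{sec:Laplace} to show that the fragmentation contribution is asymptotically $\xi$ times the left-hand integrand, so it can be absorbed once $m>1+\xi$. The resulting identity is \eqref{eq:moments-proof-1}, your limit $B(y)\,y\,J(y)/\tau(y)\to\xi$ is precisely what Lemma~\ref{lem:Laplace} delivers in the form of \eqref{eq:moments-proof-3} (with the relation~\eqref{eq:zeta_xi}) and \eqref{eq:moments-proof-4}, and your case analysis for $\xi$ matches the paper's.

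Where you diverge, however, there is a genuine gap. The paper turns the a priori manipulation into a rigorous one by working with the \emph{truncated} eigenproblem \eqref{eq:truncated1} on $[0,L]$: there $G_L$ is compactly supported, every integral is over a bounded set, there is no tail, and the resulting bound is uniform in $L$. You instead integrate the untruncated equation on $[A,R]$, which after Fubini leaves the tail $\int_R^\infty G(y)B(y)y^{1-m}e^{\Lambda(y)}J_R(y)\,dy$, and you propose to control it by a ``preliminary polynomial bound'' $\tau(y)G(y)e^{\Lambda(y)}\le C\,y^{\xi+\epsilon}$ obtained ``by a simpler application of the same Laplace estimate.'' That bound is essentially the content of Theorem~\ref{thm:G-at-infinity} and does not follow cheaply from $(\tau G e^{\Lambda})'=e^{\Lambda}\LL_+G$: the right-hand side $\LL_+G(w)$ involves $G(y)$ for \emph{all} $y>w$, so estimating it already presupposes control of $G$ at infinity, which is what you are trying to establish. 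Without that bound the tail is not shown to be finite and the $[A,R]$ argument does not close. Replacing your sharp cutoff $R$ with the truncated problem of Appendix~\ref{sec:app:approximation} (as the paper tacitly does) removes the circularity, and then the rest of your proof goes through as written.
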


\begin{proof}
  As usual, we carry out a priori estimates which can be
  rigorously justified by an approximation procedure (such as the
  truncated equation \eqref{eq:truncated1}). As $G$ is integrable, it
  is enough to prove the convergence of the above integral on
  $(x_0,+\infty)$ for a sufficiently large $x_0 > 0$. Hence, take any
  $x_0 > 0$, multiply \eqref{eq:G-eq} by $x^{1-m} e^{\Lambda(x)}$ with
  $m > 1+\xi$ and integrate on $(x_0,+\infty)$ to obtain
  \begin{multline}
    \label{eq:moments-proof-1}
    - G(x_0) e^{\Lambda(x_0)} \tau(x_0) x_0^{1-m}
    + (m-1) \int_{x_0}^\infty G(x) e^{\Lambda(x)} \tau(x) x^{-m} \,dx
    \\
    =
    \int_{x_0}^\infty G(y)
    \int_{x_0}^y e^{\Lambda(x)} x^{1-m} b(y,x) \,dx \,dy
  \end{multline}
  where we have done an integration by parts on the last term.

  We first consider the case $\xi > 0$ (this is, $\gamma \geq 0$ and
  $\nu = 0$). From Equation~\eqref{eq:tau-asymptotics-infty} we have
  that for any $\epsilon > 0$ there exists $x_0 > 0$ such that
  \begin{multline}
    \label{eq:moments-proof-2}
    (m-1) \int_{x_0}^\infty G(x) e^{\Lambda(x)} \tau(x) x^{-m} \,dx
    \\
    \geq (m-1) (1-\epsilon) \tau_\infty \int_{x_0}^\infty G(x)
    e^{\Lambda(x)} x^{\alpha-m} \,dx,
  \end{multline}
  and, applying Lemma~\ref{lem:Laplace}, also such that
  \begin{multline}
    \label{eq:moments-proof-3}
    \int_{x_0}^\infty G(y) \int_{x_0}^y e^{\Lambda(x)} x^{1-m}
    b(y,x) \,dx \,dy
    \\
    \leq (1+\epsilon) B_\infty p_1\zeta^{-1}
    \int_{x_0}^\infty G(y) e^{\Lambda(y)} y^{\alpha-m}\,dy
  \end{multline}
  (observe that we have used $\gamma \geq 0$ and $\nu = 0$ here).
  Using (\ref{eq:moments-proof-2}) and (\ref{eq:moments-proof-3}) we
  obtain from (\ref{eq:moments-proof-1}) that
  \begin{multline*}
    \Big( (m-1) (1-\epsilon) \tau_\infty
    - (1+\epsilon) B_\infty p_1\zeta^{-1} \Big)
    \int_{x_0}^\infty G(x) e^{\Lambda(x)} x^{\alpha-m} \,dx
    \\
    \leq
    G(x_0) e^{\Lambda(x_0)} \tau(x_0) x_0^{1-m}.
  \end{multline*}
  When $(m-1) (1-\epsilon) \tau_\infty - (1+\epsilon) B_\infty p_1\zeta^{-1} >
  0$ this gives a bound for the integral on the left hand side. If $m
  > 1+\xi$ we can always choose $\epsilon$ small enough for this to
  be true, because of relation~\eqref{eq:zeta_xi}, and it proves the result.

  The remaining case is $\xi = 0$, this is, $\nu>-1+\frac{\gamma+1-\alpha}{\gamma_++1-\alpha}.$
  In this case we have to substitute (\ref{eq:moments-proof-3}) by
  the following, according to Lemma \ref{lem:Laplace}:
  \begin{multline}
    \label{eq:moments-proof-4}
    \int_{x_0}^\infty G(y)
    \int_{x_0}^y e^{\Lambda(x)} x^{1-m} b(y,x) \,dx \,dy
    \\
    \leq
    (1+\epsilon) B_\infty p_1 \zeta^{-1-\nu}\Gamma(1+\nu) \int_{x_0}^\infty G(y)
    e^{\Lambda(y)}
    y^{1-m+\gamma -(\gamma_+-\alpha+1)(1+\nu)}
    \,dy.
  \end{multline}
  Since
  $\nu>-1+\frac{\gamma+1-\alpha}{\gamma_++1-\alpha}$, we have
  \begin{equation*}
    1-m+\gamma -(\gamma_+-\alpha+1)(1+\nu) < -m+\alpha.
  \end{equation*}
  Thus the exponent of $y$ on the right hand side of
  (\ref{eq:moments-proof-4}) is strictly smaller than $\alpha - m$, so
  we can always find $x_0$ large enough so that
  \begin{equation*}
    %\label{eq:moments-proof-5}
    \int_{x_0}^\infty G(y)
    \int_{x_0}^y e^{\Lambda(x)} x^{1-m} b(y,x) \,dx \,dy
    \\
    \leq
    \epsilon \int_{x_0}^\infty G(y)
    e^{\Lambda(y)}
    y^{\alpha - m}
    \,dy.
  \end{equation*}
  Using this and (\ref{eq:moments-proof-2}) in
  (\ref{eq:moments-proof-1}) we may follow a similar reasoning as
  before to obtain the result.
\end{proof}

\subsection{Asymptotic estimates of $G$ as $x \to +\infty$}
\label{sec:est-G-infty}

In this section we prove Theorem~\ref{thm:G-at-infinity} by using the
moment estimates in Section \ref{sec:G-moments}.

\begin{proof}[{\bf Proof of Theorem~\ref{thm:G-at-infinity}}]
  We divide the proof in two steps:

  \paragraph{Step 1: proof that the limit is finite.}
  Again, we carry out a priori estimates on the solution which can be
  fully justified by using the approximation
  \eqref{eq:truncated1}. Let us first prove that $x^{\alpha-\xi} G(x)
  e^{\Lambda(x)}$ has a finite limit $C \geq 0$ as $x \to +\infty$,
  and later we will show that $C > 0$. We use equation \eqref{eq:G-eq}
  to obtain
  \begin{multline*}
    %\label{eq:uniformG-1}
    (x^{-\xi} \tau(x) G(x) e^{\Lambda(x)})'
    = -\xi x^{-\xi-1} \tau(x) G(x) e^{\Lambda (x)}
    \\
    + x^{-\xi} e^{\Lambda (x)}
    \int_{x}^{\infty}
    b(y,x) G(y) \dy.
  \end{multline*}
  Let us show that the right hand side of this last expression is
  integrable on $(x_0,+\infty)$ for some $x_0 > 0$. Once we have this
  the result is proved, since then $x^{\alpha-\xi} G(x)
  e^{\Lambda(x)}$ must have a limit as $x \to +\infty$. Integrating
  the right hand side we obtain:%  first,
  \begin{multline}
    \label{eq:G-bound-proof1}
    -\xi \int_{x_0}^\infty x^{-\xi-1} \tau(x) G(x) e^{\Lambda (x)} \dx
    + \int_{x_0}^\infty x^{-\xi} e^{\Lambda (x)}
    \int_{x}^{\infty}
    b(y,x) G(y) \dy \dx
    \\
    =
    \int_{x_0}^\infty G(x) \left(
      \int_{x_0}^x
      y^{-\xi}
      b(x,y) e^{\Lambda(y)} \dy
      - \xi x^{-\xi-1} \tau(x) e^{\Lambda (x)}
    \right)
    \,dx.
  \end{multline}
  We just need to show that the parenthesis is of the order of
  $e^{\Lambda(x)} x^{\alpha-\xi-1-\epsilon}$ for some $\epsilon > 0$,
  and then Lemma \ref{lem:exp-moments-G} shows that the above integral
  is finite.

  \paragraph{The case $\xi > 0$.}

  Let us start considering the case $\xi > 0$ (this is, $\gamma \geq
  0$ and $\nu = 0$).
  Using Lemma \ref{lem:Laplace}
  \begin{equation}
    \label{eq:G-bound-proof2}
    \int_{x_0}^x
    y^{-\xi}
    b(x,y) e^{\Lambda(y)} \dy
    =
    p_1B_\infty\zeta^{-1} \, x^{\alpha-\xi-1}e^{\Lambda(x)}
    +
    O(x^{-\eta+\alpha-1-\epsilon})e^{\Lambda(x)},
  \end{equation}
  for some $\epsilon > 0$. From \eqref{eq:tau-asymptotics-infty-2nd}
  we also have
  \begin{equation}
    \label{eq:G-bound-proof2.5}
    \xi x^{-\xi-1} \tau(x) e^{\Lambda (x)}
    =
    \tau_\infty \,\xi\, x^{\alpha-\xi-1} e^{\Lambda (x)}
    + O(x^{\alpha-\xi-1-\delta}) e^{\Lambda (x)}.
  \end{equation}
  Using \eqref{eq:G-bound-proof2}-\eqref{eq:G-bound-proof2.5} and the relation~\eqref{eq:zeta_xi}, the
  parenthesis in \eqref{eq:G-bound-proof1} is, in absolute value, less
  than $C x^{\alpha-\xi-1-\delta} e^{\Lambda (x)}$ for some constant
  $C > 0$. Hence by Lemma \ref{lem:exp-moments-G} the integral in
  \eqref{eq:G-bound-proof1} is finite, and we conclude that $x^{\alpha
    - \xi} G(x) e^{\Lambda(x)}$ has a finite limit as $x \to +\infty$
  when $\xi > 0$.

 \paragraph{The case $\xi = 0$.} In this case we have from Lemma~\ref{lem:Laplace}
 \begin{equation*}
   \int_{x_0}^x
   b(x,y) e^{\Lambda(y)} \dy
   \sim
   \, p_1B_\infty\zeta^{-1-\nu}\Gamma(1+\nu)x^{\gamma -(\gamma_+-\alpha+1)(1+\nu)}e^{\Lambda(x)}.
 \end{equation*}
 Using the same reasoning as at the end of the proof of Lemma \ref{lem:exp-moments-G} we
 have that, when $\xi = 0$,
 \begin{equation*}
   \gamma -(\gamma_+-\alpha+1)(1+\nu)
   < \alpha - 1,
 \end{equation*}
 which then shows that the right hand side of
 (\ref{eq:G-bound-proof1}) is finite due to Lemma
 \ref{lem:exp-moments-G}.

 \paragraph{Step 2: proof that $C > 0$.}

 In order to show that $C > 0$ in (\ref{eq:G-asymptotic-x-large}) set
 $F(x) := \tau(x) G(x) e^{\Lambda(x)}$ and obtain the following from
 (\ref{eq:G-eq}):
 \begin{equation}
   \label{eq:F-derivative}
   F'(x) =
   e^{\Lambda (x)}
   \int_{x}^{\infty}
   b(y,x) G(y) \dy.
 \end{equation}
 In particular, $F$ is nondecreasing, and this is enough to conclude
 in the case $\xi = 0$ (since then $\tau(x) G(x) e^{\Lambda(x)}$ must
 converge to a positive quantity, so the same must be true of
 $x^\alpha G(x) e^{\Lambda(x)}$). In the case $\xi > 0$ we may bound
  \begin{multline*}
    F'(x)
    =
    e^{\Lambda (x)}
    \int_{x}^{\infty}
    b(y,x) \frac{1}{\tau(y)} e^{-\Lambda(y)} F(y) \dy
    \\
    \geq
     F(x) e^{\Lambda (x)}
    \int_{x}^{\infty}
    b(y,x) \frac{1}{\tau(y)} e^{-\Lambda(y)} \dy,
  \end{multline*}
  which implies that
  \begin{equation*}
    F(x) \geq F(x_0) \exp \left(
      \int_{x_0}^x S(w) \d w,
    \right)
  \end{equation*}
  with
  \begin{equation*}
    S(w) :=
    e^{\Lambda (w)}
    \int_{w}^{\infty}
    b(y,w) \frac{1}{\tau(y)} e^{-\Lambda(y)} \dy.
  \end{equation*}
  Due to equation (\ref{eq:tau-asymptotics-infty-2nd}) we have
  \begin{equation*}
    %\label{eq:1/tau-asymp}
    \frac{1}{\tau(x)}
    = \frac{1}{\tau_\infty x^\alpha} + R_1(x),
  \end{equation*}
  with $R_1(x) = O(x^{-\alpha-\delta})$. Using this, and due to Lemma
  \ref{lem:Laplace},
  \begin{multline*}
    \int_{x_0}^x S(w) \d w
    \geq
    \int_{x_0}^x e^{\Lambda (w)}
    \int_{w}^{x}
    b(y,w) \frac{1}{\tau(y)} e^{-\Lambda(y)} \dy \d w
    \\
    =
    \int_{x_0}^x \frac{1}{\tau(y)} e^{-\Lambda(y)}
    \int_{x_0}^{y} e^{\Lambda (w)} b(y,w) \d w \d y
    \\
    =
    p_1 B_\infty\zeta^{-1} \int_{x_0}^x \frac{1}{\tau(y)}
    (y^{\alpha - 1} + R_2(y))
    \d y
    \\
    =
    \xi \int_{x_0}^x \frac{1}{y}
    \d y
    +
    \int_{x_0}^x
    R_3(y)
    \d y
    \geq
    \xi \log(y) + C_1,
  \end{multline*}
  with $R_2(y) = O(y^{\alpha - 1 -\epsilon})$, $R_3(y) = O(y^{-1-\epsilon})$,
  and $C_1 \in \R$ some real number. As a consequence,
  \begin{equation*}
    F(x) \geq F(x_0) x^\xi e^{C_1},
  \end{equation*}
  which shows that $\lim_{x \to +\infty} F(x) x^{-\xi}$ (which we
  know exists) must be strictly positive. This finishes the proof.
\end{proof}

\subsection{Asymptotic estimates of $G$ as $x \to 0$}
\label{sec:G-at-zero}

\begin{proof}[{\bf Proof of Theorem~\ref{thm:G-at-zero}}]
Define
$$F(x):=\tau(x)G(x)e^{\Lambda(x)}.$$
We know from~\cite{MR2652618} that $F(x)\to0$ when $x\to0$ and more
precisely that $F(x)\leq C\,x^\mu.$ The derivative of $F$, as noted in
(\ref{eq:F-derivative}), is
$$F'(x)=e^{\Lambda(x)}\int_x^\infty b(y,x)G(y) \dy>0$$
so $F$ is increasing.

\paragraph{Case $\alpha_0<1$}

In this case, $\Lambda(x)\to\Lambda(0)<0$. Choose $\epsilon > 0$ such
that $p$ is a function on $[0,\epsilon)$ (the fact that this can be
done for small enough $\epsilon$ is implicit in
Hypothesis~\ref{hyp:p-at-borders}), and call $p_* = p\,
\1_{[0,\epsilon]}$. Then, from
Hypothesis~\ref{hyp:p-at-borders},
\begin{equation}\label{eq:equiv_p_zero}
  x^{1-\mu} p_*\Big( \frac{x}{y} \Big) \to p_0 \, y^{1-\mu}
  \quad \text{ as $x \to 0$},
\end{equation}
with the above convergence being pointwise in $y$. We may additionally
choose $\epsilon \in (0,1)$ and $C > 0$ such that
\begin{equation}
  \label{eq:bound_p_zero}
  p(z)\leq Cz^{\mu-1}
  \fa z\,\in(0,\epsilon).
\end{equation}
Now we write
\begin{multline*}
  x^{1-\mu} \int_x^\infty b(y,x)G(y)\,dy
  \\
  = x^{1-\mu} \int_x^{\frac x\epsilon}
  \frac{B(y)}{y}G(y)p\Bigl(\frac xy\Bigr) \,dy
  +x^{1-\mu} \int_{\frac x\epsilon}^\infty
  \frac{B(y)}{y}G(y)p\Bigl(\frac xy\Bigr)\,dy
  \\
  = x^{1-\mu} \int_\epsilon^1
  B\Bigl(\frac xz\Bigr)G\Bigl(\frac xz\Bigr) p(z)\frac{dz}z
  + x^{1-\mu} \int_{\frac x\epsilon}^\infty
  \frac{B(y)}{y}G(y)p_*\Bigl(\frac xy\Bigr)\,dy.
\end{multline*}
For the first term in the r.h.s., we use that
$B(y)\underset{y\to0}{\sim}B_0 y^{\gamma_0}$ and $G(y) \leq C y^{\mu -
  \alpha_0}$ (see~\cite{MR2652618}) to write
\begin{equation*}
  x^{1-\mu} \int_\epsilon^1B
  \Bigl(\frac xz\Bigr)G\Bigl(\frac xz \Bigr) p(z) \frac{dz}z
  \leq
  C x^{\gamma_0+1-\alpha_0} \int_\epsilon^1 z^{\alpha_0-\mu-\gamma_0-1}p(z)\,dz
\end{equation*}
and conclude that it tends to zero when $x\to0$ since
$\gamma_0+1-\alpha_0>0.$ For the second term, we
use~\eqref{eq:equiv_p_zero} and \eqref{eq:bound_p_zero} to obtain by
dominated convergence
\begin{equation*}
  x^{1-\mu} \int_{\frac x\epsilon}^\infty
  \frac{B(y)}{y}G(y)p_*\Bigl(\frac xy\Bigr) \,dy
  \xrightarrow[x\to0]{}
  p_0 \int_0^\infty B(y) y^{-\mu} G(y) \,dy.
\end{equation*}
This limit is strictly positive and finite, since $G(y)\leq C y^{\mu -
  \alpha_0}$ and $\gamma_0-\alpha_0>-1.$ Finally, we have deduced that
there is a positive constant $C>0$ such that
\begin{equation*}
  F'(x) \underset{x \to 0}{\sim} C\,x^{\mu-1},
\end{equation*}
which by integration gives
$$\tau(x)G(x)\sim C\,x^\mu$$
and so
\begin{equation*}
  G(x)
  \underset{x \to 0}{\sim}
  C\,\frac{x^\mu}{\tau(x)}
  \underset{x \to 0}{\sim}
  C\,x^{\mu-\alpha_0}.
\end{equation*}

\paragraph{Case $\alpha_0 \geq 1$}

In this case we necessarily have $\gamma>0$ and
$$\Lambda(x)\sim -C\,x^{1-\alpha_0}.$$
As a consequence, following a similar reasoning as for the previous
case, we have
$$F'(x)\sim C_1\,x^{\mu-1}e^{-C_2\,x^{1-\alpha_0}}$$
and consequently
$$F(x)\sim C_1\int_0^x y^{\mu-1}e^{-C_2\,y^{1-\alpha_0}}dy\sim C_3\,x^{\alpha_0+\mu-1}e^{-C_2\,x^{1-\alpha_0}}$$
due to l'H\^opital's rule.  This finally gives $\displaystyle G(x)
\underset{x \to 0}{\sim} C\,x^{\mu-1}.$
\end{proof}

\vspace*{5pt}
\section{Estimates of the dual eigenfunction $\phi$}
\label{sec:phi}

\subsection{Asymptotic estimates of $\phi$ as $x\to0$}

We first give the proof of Theorem~\ref{thm:phi-at-zero}, which is
rather direct:

\begin{proof}[{\bf Proof of Theorem~\ref{thm:phi-at-zero}}]
  Define
  $$\psi(x):=\phi(x)e^{-\Lambda(x)}.$$
  This function is decreasing since it satisfies
  \begin{equation*}%\label{eq:psiprime}
    \psi'(x)=
    -\frac{1}{\tau(x)}\int_0^xb(x,y)\phi(y)\,dy\, e^{-\Lambda(x)}<0.
  \end{equation*}
  Moreover it is a positive function, so to prove
  Theorem~\ref{thm:phi-at-zero} we only have to prove that $\psi$ is
  bounded at $x=0.$ Consider, for $\eta>0,$ $\tau_\eta$ as defined in
  the approximation procedure (see \eqref{eq:tau_eta} in
  Appendix~\ref{sec:app:approximation}).  Then denote by $\phi_\eta,$
  $\Lambda_\eta$ and $\psi_\eta$ the corresponding functions.  First
  we know from~\cite{MR2652618} that $\phi_\eta$ converges locally
  uniformly to $\phi$ when $\eta\to0.$ We have, for $\eta>0,$ that
  $-\Lambda_\eta(x)=\int_x^1\frac{\lambda+B(y)}{\tau_\eta(y)}dy$ is
  bounded at $x=0$ and this is why it is useful to consider this
  regularization.  We have for any $x_0>0,$
  \begin{align*}
    \sup_{\R^+}\psi_\eta=\psi_\eta(0)
    &=\psi_\eta(x_0)+\int_0^{x_0}\frac{1}{\tau_\eta(y)}\int_0^yb(y,z)\phi_\eta(z)\,dz\,e^{-\Lambda_\eta(y)}\,dy\\
    &\leq\psi_\eta(x_0)+\int_0^{x_0}\frac{1}{\tau(y)}\int_0^yb(y,z)\phi_\eta(z)\,e^{-\Lambda_\eta(z)}\,dz\,dy\\
    &\leq\psi_\eta(x_0)+\sup\psi_\eta\int_0^{x_0}\frac{1}{\tau_\eta(y)}\int_{0}^{y}b(y,z)\,dz\,dy\\
    &=\psi_\eta(x_0)+\sup\psi_\eta\int_0^{x_0}\frac{B(y)}{\tau_\eta(y)}\int_{0}^{y}p\left(\frac{z}{y}\right)\,\frac{dz}y\,dy\\
    &=\psi_\eta(x_0)+\pi_0\sup\psi_\eta\int_0^{x_0}\frac{B(y)}{\tau(y)}\,dy.\\
  \end{align*}
  Now, because $\frac{B}{\tau}$ is integrable at $x=0,$ we can choose
  $x_0>0$ such that $\pi_0\int_0^{x_0}\frac{B(y)}{\tau(y)}\,dy\\ =\rho<1$
  and we obtain
  $$(1-\rho)\sup\psi_\eta(x)\leq\psi_\eta(x_0)\xrightarrow[\eta\to0]{}\psi(x_0).$$
  So $\psi_\eta$ is uniformly bounded when $\eta\to0$ and thus the
  limit $\psi(x)$ is bounded.
\end{proof}

\subsection{A maximum principle}

% \medskip
For finding the bounds on the dual eigenfunction at
$x\to+\infty$ we use comparison arguments, valid for each truncated
problem on $[0,L]$ (see Appendix~\ref{sec:app:approximation} for
details on the truncation). Then we pass to the limit, as the bounds
we obtain are independent of $L$. % we need to prove this...
The function $\phi_L(x)$ satisfies the equation
\[
  \mathcal{S}\phi_L(x)=0 \quad (x\in(0,L)),
\]
where $\mathcal{S}$ is the operator given by
\begin{equation*}
\mathcal{S}w(x):=-\tau(x)w'(x)+(B(x)+\lambda_L)w(x)-\int_{0}^{x}b(x,y)w(y)dy,
\end{equation*}
defined for all functions $w\in W^{1,\infty}(0,L)$ and for $x\in(0,L)$.
This operator satisfies
\begin{equation}\label{eq:duality}\forall w\in W^{1,\infty}(0,L)\ s.t.\ w(L)=0,\qquad \int_0^L\mathcal S w(x)\,G_L(x)\,dx=0\end{equation}
where $G_L$ is the eigenfunction of the truncated growth-fragmentation
operator.  We recall the concept of \emph{supersolution}:
\begin{dfn}
We say that $w\in W^{1,\infty}(0,L)$ is a \emph{supersolution} of $\mathcal{S}$ on the interval $I\subseteq (0,L)$ when
\[
    \mathcal{S}w(x)> 0\ (x\in I).
\]
\end{dfn}
Maximum principles were a powerful tool for proving the existence of sub and supersolutions for the growth-fragmentation models as in \cite{MR2250122,MR2652618}.
For our case, we recall the maximum principle given in \cite{MR2652618}.
\begin{lem}[Maximum principle for $\mathcal{S}$]
  \label{lem:maxprinciple} Assume
  Hypotheses~\ref{hyp:self-similar-frag}-\ref{hyp:asymptotics}. There
  exists $A>0$, independent of $L$, such that if $ w$ is a
  supersolution of $\mathcal{S}$ on $(A,L)$, $w\geq 0$ on $[0,A]$ and
  $ w(L)\geq 0$ then $ w\geq 0$ on $[A,L]$.
\end{lem}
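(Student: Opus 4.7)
The plan is to argue by contradiction: I suppose some supersolution $w$ satisfying the boundary hypotheses takes a negative value on $[A,L]$, and aim to derive an inequality which, for $A$ chosen large enough (depending on the asymptotic parameters of Hypothesis~\ref{hyp:asymptotics} but not on $L$), cannot hold. Since $w \in W^{1,\infty}(0,L)$ is continuous and $[A,L]$ is compact, the minimum $m := \min_{[A,L]} w$ is attained at some $x_*$, and the hypotheses $w \geq 0$ on $[0,A]$ and $w(L)\geq 0$ force $x_* \in (A,L)$ whenever $m<0$. The needed regularity at $x_*$ (i.e.\ the first-order condition $w'(x_*)=0$) can be obtained by approximating $w$ by smooth supersolutions of the truncated problem from Appendix~\ref{sec:app:approximation} and passing to the limit, or by working with Dini derivatives using the Lipschitz structure.

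Plugging $w'(x_*) = 0$ into $\mathcal{S}w(x_*) > 0$ gives $(B(x_*)+\lambda_L)\,w(x_*) > \int_0^{x_*} b(x_*,y)\,w(y)\,dy$, and I would split this integral at $y=A$. On $[0,A]$ the integrand is nonnegative by the sign hypothesis on $w$, while on $[A,x_*]$ minimality of $w(x_*)$ together with the self-similar form $b(x_*,y) = B(x_*)\,x_*^{-1}\,p(y/x_*)$ yields $\int_A^{x_*} b(x_*,y)\,w(y)\,dy \geq w(x_*)\, B(x_*) \int_{A/x_*}^{1} p(z)\,dz$. Dividing through by the negative quantity $w(x_*)$ reduces the supersolution inequality to $\lambda_L < B(x_*)\,\bigl(\int_{A/x_*}^{1} p(z)\,dz - 1\bigr)$.

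The remaining task is to use Hypothesis~\ref{hyp:asymptotics} to rule out this last inequality uniformly in $L$ and $x_* \in (A,L)$. When $\gamma \leq 0$ the factor $B(x_*)$ is bounded as $x_* \to +\infty$, so taking $A$ large suffices: the bracket is at most $\pi_0-1$, while $\lambda_L$ remains bounded away from zero. The main obstacle is the case $\gamma>0$ where $B(x_*)\to +\infty$; here I would run the same argument on the ratio $w(x)/x$ in place of $w(x)$, so that the first-order condition becomes $w'(x_*)=w(x_*)/x_*$ and a factor $z$ appears inside the $p$-integral via the change of variables $y = x_* z$. The mass-preservation identity $\pi_1 = \int_0^1 z\,p(z)\,dz = 1$ then produces the cancellation needed to absorb the divergence of $B(x_*)$, while the remaining drift correction $\tau(x_*)/x_*$ is controlled through the asymptotic balance $\gamma - \alpha + 1 > 0$. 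This realises the maximum principle of \cite{MR2652618}.
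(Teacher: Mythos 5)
Your approach is genuinely different from the paper's. The paper's proof is an \emph{integral} argument: it tests the supersolution inequality against $\mathbb{1}_{w\leq 0}$ to obtain $\mathcal{S}w_- \geq f\,\mathbb{1}_{w\leq 0}$ on all of $(0,L)$ (here $w_-$ is the negative part, which vanishes on $[0,A]$ and at $x=L$), and then integrates against the positive eigenfunction $G_L$, using the duality relation $\int_0^L \mathcal{S}v\,G_L=0$ to conclude $0\geq\int_A^L f\,\mathbb{1}_{w\leq 0}G_L\,dx$ and hence $\mathbb{1}_{w\leq 0}\equiv 0$. That argument is short, needs no asymptotics of the coefficients whatsoever, and works for any $A>0$. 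Your proposal, by contrast, is a \emph{pointwise} maximum principle at an interior minimum, and it is here that it runs into a genuine gap.

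The gap is in the case $\gamma>0$. Running the argument on $w/x$ and dividing by $w(x_*)<0$ produces, as you indicate,
\begin{equation*}
\lambda_L \;<\; \frac{\tau(x_*)}{x_*} \;-\; B(x_*)\int_0^{A/x_*} z\,p(z)\,dz .
\end{equation*}
You claim the drift correction $\tau(x_*)/x_*$ is ``controlled through the asymptotic balance $\gamma-\alpha+1>0$'', but that condition allows $\alpha\geq 1$, in which case $\tau(x_*)/x_*\sim\tau_\infty x_*^{\alpha-1}$ does not decay as $x_*\to+\infty$. The compensating term $B(x_*)\int_0^{A/x_*}z\,p(z)\,dz$ only behaves, under Hypothesis~\ref{hyp:p-at-borders} with $p_0>0$, like $C_A\,x_*^{\gamma-\mu-1}$, which dominates $x_*^{\alpha-1}$ only if $\gamma\geq\alpha+\mu$; that is \emph{not} implied by $\gamma-\alpha+1>0$. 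Worse, for the mitosis case $p=2\delta_{1/2}$ (explicitly allowed, with $p_0=0$), the compensating term vanishes identically once $x_*>2A$, and the inequality above reduces to $\lambda_L<\tau(x_*)/x_*$, which cannot be ruled out for $\alpha\geq 1$ no matter how large $A$ is. So the contradiction you seek simply does not arrive in this regime. A secondary, minor issue is that the first-order condition $w'(x_*)=0$ is not automatic for a $W^{1,\infty}$ function at its minimum; you flag this and suggest an approximation, which is plausible but would need to be done carefully in a way compatible with the truncated supersolution structure. Note finally that the paper's integral proof sidesteps all of these issues entirely: it uses only the positivity of $G_L$ and $f$, and the sign structure of the kernel $b$.
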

\begin{proof}
We start from the fact $w$ is a supersolution on $(A,L)$
\[-\tau(x) w'(x)+(B(x)+\lambda_L) w(x)-\int_0^x b(x,y) w(y)\,dy=:f(x)>0.\]
Testing this equation against $\1_{w\leq0}$ we obtain on $(A,L)$
\begin{align*}
-\tau(x) w_-'(x)+(B(x)+\lambda_L) w_-(x)&=\1_{ w(x)<0}\int_0^x b(x,y) w(y)\,dy+f(x)\1_{w(x)\leq0}\\
&\geq  \int_0^x b(x,y) w_-(y)\,dy+f(x)\1_{w(x)\leq0}.
\end{align*}
Extend $f$ by zero on $[0,A].$
Since $w_-(x)=0$ on $[0,A]$ by assumption, the latter inequality holds true on $(0,L)$ and it writes
\[\forall x\in(0,L),\quad\mathcal S w_-(x)\geq f(x)\1_{w(x)\leq0}.\]
Testing this last inequality against $G_L$, we obtain using~\eqref{eq:duality}
\[0\geq\int_0^Lf(x)\1_{w(x)\leq0}G_L(x)\,dx=\int_A^Lf(x)\1_{w(x)\leq0}G_L(x)\,dx.\]
Because $f$ and $G_L$ are positive on $(A,L)$, this is possible only if $\1_{w\leq0}\equiv0$ on $(A,L)$ and it ends the proof.
\end{proof}

\subsection{Asymptotic estimates of $\phi$ as $x\to+\infty$}

% \medskip
Now we prove the results concerning the asymptotic behavior
of $\phi(x)$ when $x\to+\infty$, Theorem~\ref{thm:allbounds}.  For
these results, we still assume that
Hypotheses~\ref{hyp:self-similar-frag}-\ref{hyp:p-at-borders} are
satisfied and, in the case $\gamma<0,$ we additionally assume that $\mu=1$ and $p_0>0$ (so that $p(z)\xrightarrow[z\to0]{}p_0>0).$
We recall that Hypothesis~\ref{hyp:asymptotics} says that $B(x)$
behaves like a $\gamma$-power of $x$ and $\tau(x)$ like an
$\alpha$-power of $x,$ with $\gamma+1-\alpha>0.$

\begin{proof}[{\bf Proof of Theorem~\ref{thm:allbounds}}]
  The proof is done in two cases, and each case is proved in two
  steps. In the first step we give particular supersolutions and prove
  the upper bound, and in the second one we do the corresponding for
  lower bounds.

  \medskip
  \noindent
  {\bf Case 1: $\gamma >0$.}

  {\it Step 1: Upper bounds.} We claim that for any $C>0,$ there
  exists $A>0$ and $L_*>0$ such that $$v(x):=Cx+1-x^k$$ is a
  supersolution on $[A,L]$ for any $L>L_*,$ provided that
  $\max(0,\alpha-\gamma)<k<1.$ First we recall that
  $\gamma+1-\alpha>0$ by assumption, so $\alpha-\gamma<1$ and we can
  find $k\in(\alpha-\gamma,1).$ Then
  $$\mathcal{S}v(x)=-\tau(x)(C-kx^{k-1})+\lambda(C x-x^k+1)+(\pi_k-1)B(x)x^k-(\pi_0-1)B(x)$$
  and the right hand side is positive for $x$ large enough because the
  dominant term is $C\lambda x+(\pi_k-1)B(x)x^k\sim C\lambda
  x+(\pi_k-1)B_\infty x^{\gamma+k}.$ Indeed $\pi_k>1$ because $k<1$
  (see Remark~\ref{rem:b-moment-bounds}) and the dominant power is
  $\gamma+k$ because $k>0$ and $\gamma+k>\alpha.$

  \

  Now we prove that there exists $C>0$ such that
  $$\forall\,x>0,\qquad\phi(x)\leq C(1+x).$$
  First we can choose $C$ such that $v(x)=Cx+1-x^k$ is bounded below
  by a positive constant. Moreover we take an approximation $\phi_L$
  of $\phi$ such that $\phi_L(L)=0$.  Then, choosing $K>0$ large
  enough, we have that $Kv(x)>\phi(x)$ on $[0,A]$ because $\phi$ is
  bounded uniformly in $L$ on $[0,A]$, and $Kv(L)=KCL+K-KL^k>0$ for
  $L$ large enough.  So, using the maximum principle and the previous
  lemma, we obtain that
  $$\forall\,x>0,\qquad\phi(x)\leq Kv(x)\leq C(1+x).$$

  \

  {\it Step 2: Lower bounds.} For the lower bounds we first prove that $v(x):=x+x^k-1$ is a subsolution for $\max(0,1-\gamma)<k<1.$\\
  The idea is to use $x^k$ to transform $x$ which is a supersolution
  into a subsolution.
  $$\mathcal{S}v(x)=-\tau(x)(1+kx^{k-1})+\lambda(x+x^k-1)-(\pi_k-1)B(x)x^k+(\pi_0-1)B(x)$$
  where $\pi_k>1$ since $k<1.$
  Due to Assumption~\eqref{eq:B-asymptotics-infty}, $B(x)x^k\sim B_\infty x^{\gamma+k}$ and $v(x)$ is a subsolution for $x$ large because $k>0$ and $\gamma+k>1.$\\
  \

  For $\gamma>0,$ there exists $C>0$ such that
  $$\forall\,x>0,\qquad \phi(x)\geq C(x-1)_+.$$
  We know that $\phi$ is positive, so for $C$ small enough,
  $C(x+x^k-1)-\phi(x)<0$ on $[0,A]$.  Moreover, taking an
  approximation $\phi_L$ of $\phi$ such that $\phi_L(L)= L$, we have
  $Cv(L)-\phi(L)<0$ for $C<1$ and $L$ large enough.  Finally we use
  the lemmas on the maximum principle and the subsolution to conclude
  that there exists $C>0$ such that
  $$\forall x>0,\qquad\phi(x)>C(x+x^k-1)$$
  and the result follows.\\
  \strut\\
  {\bf Case 2: $\gamma <0$.}

{\it Step 1: Upper bounds.} We start by proving that for any $\eta>\bigl(\frac{-\gamma\lambda}{B_\infty p_0}\bigr)^{\frac1\gamma},$ $v(x)=(\eta+x)^{\gamma-1}$ is a supersolution.
We compute
\begin{align*}
\mathcal S v(x)&=(1-\gamma)\tau(x)(\eta+x)^{\gamma-2}+(\lambda+B(x))(\eta+x)^{\gamma-1}\\
&\hspace{6cm}-\int_0^x b(x,y)(\eta+y)^{\gamma-1}\,dy
\end{align*}
and to estimate the last term in the r.h.s. we proceed similarly as in the proof of Theorem~\ref{thm:G-at-zero}.
We write, for $\epsilon\in(0,1),$
\begin{align*}
\int_0^x b(x,y)(\eta+y)^{\gamma-1}\,dy = \frac{B(x)}{x}&\int_0^{\epsilon x}(\eta+y)^{\gamma-1}p\Bigl(\frac{y}{x}\Bigr)\,dy\\
& +B(x)\int_\epsilon^1(\eta+zx)^{\gamma-1}p(z)\,dz.
\end{align*}
Then, choosing $\epsilon$ such that~\eqref{eq:bound_p_zero} is satisfied (for this we use Hypothesis~\ref{hyp:p-at-borders}), we obtain by dominated convergence from~\eqref{eq:equiv_p_zero} that
\[\frac{B(x)}{x}\int_0^{\epsilon x}(\eta+y)^{\gamma-1}p\Bigl(\frac{y}{x}\Bigr)\,dy\underset{x\to+\infty}{\sim}\frac{B(x)}{x}\frac{p_0\eta^\gamma}{-\gamma}.\]
On the other hand we have
\[B(x)\int_\epsilon^1(\eta+zx)^{\gamma-1}p(z)\,dz\underset{x\to+\infty}{\sim}x^{\gamma-1}B(x)\int_\epsilon^1 z^{\gamma-1}p(z)\,dz.\]
Since $\gamma<0,$ we obtain
\[\int_0^x b(x,y)(\eta+y)^{\gamma-1}\,dy\underset{x\to+\infty}{\sim}\frac{B_\infty p_0\eta^\gamma}{-\gamma}x^{\gamma-1}\]
and finally
\[\mathcal S v(x)\underset{x\to+\infty}{\sim}\left(\lambda-\frac{B_\infty p_0\eta^\gamma}{-\gamma}\right)x^{\gamma-1}\]
because $\tau(x)\sim\tau_\infty x^{\alpha}$ and $\alpha-1<\gamma<0.$
So $v(x)$ is a supersolution for $x$ large when $\eta>\bigl(\frac{-\gamma\lambda}{B_\infty p_0}\bigr)^{\frac1\gamma}.$\\

Now, we claim that there exist $C>0$ and $\epsilon>0$ such that
$$\forall\,x>0,\qquad \phi(x)\leq C(\eta+x)^{\gamma-1}.$$
The proof of this fact follows from the  maximum principle and taking the an approximation $\phi_L$ of $\phi$ such that $\phi_L(L)=0$ and that $v(x)$ is a supersolution.\\

{\it Step 2: Lower bounds.} For the lower bounds we define
\begin{equation*}
v(x):=
\left\{\begin{array}{ll}
0&\quad\text{for}\ 0<x<\epsilon,\\
(x-\epsilon)x^{\gamma-2}&\quad\text{for}\ x>\epsilon.
\end{array}\right.
\end{equation*}
Then for $\epsilon<\left(\frac{\lambda\gamma(\gamma-1)}{B_\infty p_0}\right)^{\frac1\gamma},$ $v$ is a subsolution.
Indeed we have for $x>\epsilon$
\begin{align*}
\mathcal S v(x)&=\tau(x)(x^{\gamma-2}+(\gamma-2)(x-\epsilon)x^{\gamma-3})+(\lambda+B(x))(x-\epsilon)x^{\gamma-2}\\
&\hspace{3cm}-\int_\epsilon^x b(x,y)(y-\epsilon)y^{\gamma-2}\,dy
\end{align*}
and, reasoning as in Step~1, we obtain that
\[\mathcal S v(x)\underset{x\to+\infty}{\sim}(\lambda-B_\infty p_0 C_\epsilon)x^{\gamma-1}.\]

Finally, there exist $C>0$ and $\epsilon>0$ such that
$$\forall\,x>0,\qquad\phi(x)\geq Cx^{\gamma-2}(x-\epsilon)_+.$$

Again, choosing an approximation $\phi_L$ of $\phi$ such that  $\phi_L(L)=L$, the proof uses the maximum principle and the fact that $v(x)$ is a subsolution.
\end{proof}

\section{Entropy dissipation inequality}\label{sec:entropy}

As it was seen in \cite{MR2065377,MR2162224,MR2536450,MR2832638} the
\emph{general relative entropy principle} applies to solutions of
\eqref{eq:growth-frag}. We remind that we use the entropy $H[g|G]$
defined in (\ref{entropydefinition}), with dissipation $D[g|G]$ given
by (\ref{entropydissipation}).  We recall that
\begin{equation*}
\frac{d}{dt}H[g|G]=-D[g|G] \le 0.
\end{equation*}
For the proof of the entropy inequality we will use \cite[Lemma
2.2]{MR2832638} with $\zeta(x)\equiv 1$. We need to check its hypotheses.
\begin{lem}
  Assume that
  Hypotheses~\ref{hyp:self-similar-frag}-\ref{hyp:asymptotics} and
  \ref{hyp:p-bounded} are satisfied with $\gamma \neq 0$. Given $M>1$
  there exists $K>0$ and $R>1$ such that the profiles $\phi$ and $G$
  satisfy the relations
  \begin{align}
    0\leq G(x)\leq K&
    \quad (x>0),
    \label{const1}
    \\
    \int_{Rx}^{\infty}G(y)\phi(y)\,dy \leq KG(x)&   \quad (x>M),
    \label{const2}
    \\
    \phi(y)\leq K\phi(z)& \quad (\max\lbrace2RM,Rz\rbrace<y<2Rz).
    \label{const3}
  \end{align}
\end{lem}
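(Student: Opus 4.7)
The plan is to derive the three estimates from the profile asymptotics established in Theorems~\ref{thm:G-at-infinity}, \ref{thm:G-at-zero}, and~\ref{thm:allbounds}, noting that Hypothesis~\ref{hyp:p-bounded} forces $\mu=1$ and $p_0>0$ so Hypothesis~\ref{hyp:p-at-borders} is in force and those theorems are applicable. Throughout, the large-$x$ regime will be handled by the sharp asymptotics of $G$ and $\phi$, and the bounded window of $x$ by continuity and strict positivity of $G$.

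For~\eqref{const1} I would combine Theorem~\ref{thm:G-at-zero}, which gives $G(x)\sim C\,x^{1-\alpha_0}$ when $\alpha_0<1$ and $G(x)\sim C$ when $\alpha_0\geq 1$ as $x\to 0$, with the exponential decay $G(x)=O(e^{-\Lambda(x)}x^{\xi-\alpha})$ at infinity from Theorem~\ref{thm:G-at-infinity}. Since $\alpha_0<2$ by Hypothesis~\ref{hyp:p-bounded}, both endpoints yield bounded behaviour, and continuity of $G$ on any compact subinterval of $(0,\infty)$ delivers the uniform bound $K$.

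The integral bound~\eqref{const2} is the core step. The key fact is that $\Lambda$ grows like a positive power of $x$ (because $\gamma+1-\alpha>0$), so $\Lambda(Rx)-\Lambda(x)\to+\infty$ as $x\to+\infty$ and can be made arbitrarily large by taking $R>1$ large. Combining the upper bound on $\phi$ from Theorem~\ref{thm:allbounds} ($\phi(y)\leq C_2 y$ if $\gamma>0$, $\phi(y)\leq C_2 y^{\gamma-1}$ if $\gamma<0$) with the asymptotic $G(y)\sim C e^{-\Lambda(y)} y^{\xi-\alpha}$, the integrand is bounded by $e^{-\Lambda(y)}$ times an explicit power $y^q$. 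A Laplace-type integration by parts against the exponential factor gives
\begin{equation*}
\int_{Rx}^{\infty} e^{-\Lambda(y)}\,y^q\,dy \underset{x\to+\infty}{\sim} \frac{(Rx)^q}{\Lambda'(Rx)}\, e^{-\Lambda(Rx)}.
\end{equation*}
Dividing the resulting upper bound by the matching lower bound $G(x)\geq c\, e^{-\Lambda(x)} x^{\xi-\alpha}$ supplied by Step~2 of the proof of Theorem~\ref{thm:G-at-infinity}, the ratio becomes a polynomial factor times $e^{-(\Lambda(Rx)-\Lambda(x))}$, which tends to zero. Hence~\eqref{const2} holds with $K=1$ for all $x\geq X_0$; on the bounded remainder $[M,X_0]$ one uses $\int_0^\infty G\phi=1<\infty$ together with the strict positivity and continuity of $G$ to absorb the remaining ratio into a larger constant.

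For the pointwise comparison~\eqref{const3}, the condition $\max\{2RM,Rz\}<y<2Rz$ yields $z>M>1$ and $R\leq y/z\leq 2R$, placing both $y$ and $z$ in the validity range of Theorem~\ref{thm:allbounds}. When $\gamma>0$ this gives $\phi(y)/\phi(z)\leq (C_2/C_1)(y/z)\leq 2R\,C_2/C_1$, and when $\gamma<0$ it gives $\phi(y)/\phi(z)\leq (C_2/C_1)(y/z)^{\gamma-1}\leq (C_2/C_1)\,R^{\gamma-1}$ since $\gamma-1<0$ and $y/z\geq R$. The hard part of the whole lemma is~\eqref{const2}: one must choose $R>1$ large enough that the exponential gain $e^{-(\Lambda(Rx)-\Lambda(x))}$ dominates every polynomial correction, and then handle the bounded range of $x$ by compactness; the two remaining estimates follow directly from the profile asymptotics.
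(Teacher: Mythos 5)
Your proposal is correct and follows essentially the same approach as the paper's proof, which derives \eqref{const1} from Theorems~\ref{thm:G-at-infinity} and \ref{thm:G-at-zero} with $\mu=1$, \eqref{const2} via l'H\^opital's rule applied to the tail integral using the $G$ and $\phi$ asymptotics, and \eqref{const3} directly from the two-sided bounds of Theorem~\ref{thm:allbounds}. Your write-up simply spells out in more detail what the paper compresses into three sentences; the only superfluous remark is that $R>1$ need not be taken ``large enough'' in \eqref{const2}, since $\Lambda(Rx)-\Lambda(x)\to+\infty$ for any fixed $R>1$.
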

\begin{proof}
  The bound \eqref{const1} on $G$ is true because of
  Theorem~\ref{thm:G-at-infinity} and Theorem~\ref{thm:G-at-zero} with
  $\mu=1.$ For the second bound, we have due to l'H\^opital's rule and
  using Theorem~\ref{thm:G-at-infinity}
  \begin{align*}
    \int_{Rx}^{\infty}G(y)\phi(y)\,dy\leq\,&  K\int_{Rx}^{\infty}y^{1+\xi-\alpha}e^{-\Lambda(y)}\,dy\sim K x^{1+\xi-\gamma}e^{-\Lambda(Rx)}\\
    \leq\, & K x^{\xi-\alpha}e^{-\Lambda(x)} \sim KG(x).
  \end{align*}

  Finally, \eqref{const3} is a consequence of
  Theorem~\ref{thm:allbounds}.
\end{proof}
Moreover, for proving the entropy-entropy dissipation inequality, we
will need the following bounds, similar to those required in
\cite[Theorem 2.4]{MR2832638}.
\begin{lem}
  \label{lem:bounds-Kbxy}
  Suppose that the coefficients satisfy Hypotheses~{\rm
    \ref{hyp:self-similar-frag}-\ref{hyp:asymptotics}} and
  \ref{hyp:p-bounded} with one of the following additional conditions
  on the exponents $\gamma_0$ and $\alpha_0$:
  \begin{itemize}
 \item either $\alpha_0>1$,
 \item or $\alpha_0=1$ and $\gamma_0\leq1+\lambda/\tau_0,$
 \item or $\alpha_0<1$ and $\gamma_0\leq2-\alpha_0.$
  \end{itemize}
  Let $G$ and $\phi$ be the stationary profiles for the problems
  \eqref{eq:G} and \eqref{eq:phi}.  Then we can choose constants
  $K,M>0$ and $R>1$ such that the profiles $\phi$ and $G$ satisfy
  \begin{itemize}
  \item If $\gamma >0$ then
    \begin{align}
      G(x)\phi(y)\leq K b(y,x)& \qquad(0<x<y<\max\lbrace 2Rx, 2RM\rbrace),\label{condD21}\\
      y^{-1} \leq K b(y,x)& \qquad(y>M,\,\,y>x>0).\label{condD22}
    \end{align}
  \item If $\gamma <0$ then
    \begin{equation}
      G(x)\phi(y)\leq K b(y,x) \qquad(0<x<y)\label{condD21-D22}.
    \end{equation}
  \end{itemize}
\end{lem}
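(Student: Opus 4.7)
The key reduction comes from Hypothesis~\ref{hyp:p-bounded}: since $p(z) \geq \underline p > 0$ on $(0,1)$, for every $0 < x < y$ we have
\[
  b(y,x) \;\geq\; \underline p\,\frac{B(y)}{y}.
\]
Thus each of the inequalities (\ref{condD21})--(\ref{condD21-D22}) reduces to showing $G(x)\,\phi(y)\,y \leq K\,B(y)$, while (\ref{condD22}) reduces to $1 \leq K\,B(y)\,y^{-1}\cdot y = K\,B(y)$ for $y > M$. The latter is immediate when $\gamma > 0$, because $B(y) \sim B_\infty y^\gamma \to +\infty$, so picking $M$ large enough suffices.

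The remaining inequalities I would split into a large-$y$ regime and a small-$y$ regime. In the large-$y$ regime, Theorem~\ref{thm:G-at-infinity} gives $G(x) \leq C\,x^{\xi-\alpha} e^{-\Lambda(x)}$ with $\Lambda$ growing like a strictly positive power of $x$, so $G$ decays faster than any polynomial. For (\ref{condD21}) with $\gamma > 0$ and $y$ large, the constraint $y < 2Rx$ forces $x$ to be of order $y$, so the super-polynomial decay of $G(x)$ defeats the polynomial factor $\phi(y)\,y/B(y) \leq C y^{2-\gamma}$ obtained from Theorem~\ref{thm:allbounds}. For (\ref{condD21-D22}) with $\gamma < 0$, Theorem~\ref{thm:allbounds} yields $\phi(y) \leq C y^{\gamma-1}$ and $B(y)/y \sim B_\infty y^{\gamma-1}$, so the ratio is bounded, while $G(x)$ is uniformly bounded on $(0,\infty)$ thanks to Theorems~\ref{thm:G-at-zero} and~\ref{thm:G-at-infinity}.

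The delicate part is the small-$y$ regime, where one must simultaneously track $x \to 0$ since $x < y$. Here I would combine Theorem~\ref{thm:G-at-zero} (with $\mu=1$ from Hypothesis~\ref{hyp:p-bounded}), which gives $G(x) \sim C x^{1-\alpha_0}$ for $\alpha_0<1$ and $G(x)\sim C$ for $\alpha_0\geq 1$, with Theorem~\ref{thm:phi-at-zero}, which gives $\phi(y) \sim C e^{\Lambda(y)}$, bounded by a positive constant when $\alpha_0<1$, by a power $y^{\lambda/\tau_0}$ when $\alpha_0 = 1$, and by an exponentially small quantity when $\alpha_0>1$. Comparing with $B(y)/y \sim B_0\,y^{\gamma_0-1}$ produces an inequality of the form $y^\beta \leq K$ as $y\to 0$, where $\beta$ depends on $\alpha_0$, $\gamma_0$ and, in the borderline case, on $\lambda/\tau_0$. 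The three additional hypotheses are tuned so that $\beta \geq 0$: when $\alpha_0<1$ one has $\beta = (1-\alpha_0) - (\gamma_0-1)$, whence the condition $\gamma_0 \leq 2-\alpha_0$; when $\alpha_0=1$ one has $\beta = \lambda/\tau_0 - (\gamma_0-1)$, whence $\gamma_0 \leq 1 + \lambda/\tau_0$; and when $\alpha_0>1$ the super-polynomial smallness of $\phi(y)$ absorbs any polynomial in $y$, so no upper bound on $\gamma_0$ is required.

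The main obstacle is organizing this parameter-dependent case analysis cleanly across all three windows, and checking that every implicit constant can be chosen independently of the truncation parameter $L$ from the approximation scheme in Appendix~\ref{sec:app:approximation}, so that the estimates survive passage to the limit $L \to +\infty$.
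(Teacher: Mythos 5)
Your proposal is correct and follows essentially the same approach as the paper's proof: reduce via $b(y,x)\geq\underline p\,B(y)/y$ from Hypothesis~\ref{hyp:p-bounded}, split into small-$y$ and large-$y$ windows, and invoke the profile asymptotics of Theorems~\ref{thm:G-at-zero}, \ref{thm:G-at-infinity}, \ref{thm:phi-at-zero}, and \ref{thm:allbounds}, with the three conditions on $(\alpha_0,\gamma_0)$ tuned exactly as you computed. The only superfluous concern is the $L$-dependence of the constants: the lemma is stated for the limiting profiles $G$ and $\phi$ themselves, so the uniform-in-$L$ issues are already absorbed into the cited theorems and need not be re-verified here.
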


\begin{proof}
  {\bf Step 1: $\gamma > 0$, $y\leq2RM$ and $x<y.$} We need to estimate
  $G(x)\phi(y)$ at the limit $x<y\to0.$ Assume for the moment that
  $\alpha_0 \leq 1$. Using Theorem~\ref{thm:G-at-zero} ($G(x)\sim C
  x^{1-\alpha_0}$, notice that due to Hypothesis~\ref{hyp:p-bounded}
  one has $\mu=1$) and Theorem~\ref{thm:phi-at-zero} to bound $G(x)$
  and $\phi(y)$ respectively, we have
  \[ G(x)\phi(y)\leq C x^{1-\alpha_0}e^{\Lambda(y)}\leq
  C'\,y^{1-\alpha_0}\] since $\alpha_0\leq1.$ Then under the condition
  $\gamma_0\leq2-\alpha_0$ and from Hypothesis~\ref{hyp:p-bounded}, we
  get
  \begin{equation}
    \label{eq:Gphi-bound}
    G(x)\phi(y)\leq Cy^{\gamma_0-1}\leq Kb(y,x).
  \end{equation}
  When $\alpha_0=1,$
  we can do better since in this case we have necessarily $\gamma_0>0$
  and
  \[\Lambda(y)\underset{y\to0}{\sim}\frac{\lambda}{\tau_0}\ln(y).\]
  Thus we can write
  \[G(x)\phi(y)\leq Cy^{\lambda/\tau_0}\] and we obtain the bound
  $G(x)\phi(y)\leq Kb(y,x)$ from Hypothesis~\ref{hyp:p-bounded} as
  soon as $\gamma_0\leq1-\frac{\lambda}{\tau_0}.$

  When $\alpha_0 > 1$ we know that $\phi(y)$ decays exponentially as
  $y \to 0$, and one easily sees that the bound \eqref{eq:Gphi-bound}
  holds without any restriction on $\gamma_0$.
  
  \
  
  \noindent{\bf Step 2: $\gamma > 0$ and $2RM<y\leq2Rx.$}
  We need to estimate $G(x)\phi(y)$ at the limit $2Rx\geq
  y\to+\infty.$ Using \eqref{eq:bounds-gamma-positive} and
  \eqref{eq:G-asymptotic-x-large} we have
  \begin{align*}
   G(x)\phi(y)&\leq C(1+y)x^{\xi-\alpha}e^{-\Lambda(x)}\\
&\leq C'(1+y)y^{\xi-\alpha}e^{-\Lambda(y/2R)}\\
&\leq C''y^{\gamma-1}
\end{align*}
where $C''$ depends on $\alpha$, $\gamma$ and $R$. We conclude by
using Hypothesis~\ref{hyp:p-bounded}.

\

\noindent{\bf Step 3: $\gamma > 0$, $y>M$ and $y>x>0.$}
Since $\mu=1$ by assumption, we know from Theorems~\ref{thm:G-at-zero}
and \ref{thm:G-at-infinity} that $G(x)$ is bounded. When $\gamma>0,$
we observe first that $y^{-1}\leq Cy^{\gamma-1}$ and we conclude that
\eqref{condD22} holds true by using Hypothesis~\ref{hyp:p-bounded}.

\noindent{\bf Step 4: $\gamma < 0$.} When $\gamma<0$ we have from
Theorems~\ref{thm:allbounds}, \ref{thm:phi-at-zero} and
Hypothesis~\ref{hyp:p-bounded} that $\phi(y)\leq Cy^{\gamma-1}\leq
Kb(y,x)$ for all $0 < x < y$.
\end{proof}

At this point, we have all the tools to prove the entropy - entropy
dissipation inequality.
\begin{proof}[{\bf Proof of Theorem~\ref{thm:entropy_inequality}}]
From \cite[Lemma 2.1]{MR2832638} one can rewrite the entropy as follows
\begin{equation}
 D_2[g|G] :=
 \int_{0}^{\infty}\int_{x}^{\infty}
 \phi(x)G(x)\phi(y)G(y)(u(x)-u(y))^2 \dy \dx
 = H[g|G].
\end{equation}
If one looks at the integrand, one realizes that $D$ and $D_2$ have both $\phi(x)$ and $G(y)$ as a common terms. So we would like to compare and check that
\begin{equation}\label{eq:comparison}
 G(x)\phi(y) \le K b(y,x).
\end{equation}
We will denote by $C$ any constant depending on $G$, $\phi$, $K$, $M$, or $R$, but not on $g$.  We now distinguish two cases.\\

{\bf Case $\gamma <0$.}
The relation \eqref{eq:comparison} is satisfied due to \eqref{condD21-D22}. So we can compare pointwise the integrands of $D_2[g|G]$ with $D[g|G]$ and the inequality \eqref{entropy1} holds.\\

{\bf Case $\gamma>0$.} For proving the case $\gamma >0$ we follow the same argument as in \cite[Theorem 2.4]{MR2832638}. We start by rewriting $D_2[g|G]$ as follows:
\begin{equation*}
D_2[g|G] = D_{2,1}[g|G] + D_{2,2}[g|G],
\end{equation*}
where
\[
   D_{2,i}:=\iint_{A_i} \phi(x)G(x)\phi(y)G(y)(u(x)-u(y))^2 \dy \dx
\]
with $A_1:=\lbrace (x,y)\in \mathbb{R}_+^2 \,:\, y>x\,\,,\,\, y\leq RM\, \mbox{or}\,\, y < Rx\rbrace$ and $A_2=A_1^c$. For the first term and thanks to \eqref{condD21} one has
\begin{align}
D_{2,1}[g|G]\leq& \int_{0}^{\infty}\int_{x}^{\max\lbrace
  2Rx,2RM\rbrace}Kb(y,x)\phi(x)G(y)(u(x)-u(y))^2 \dy \dx
\nonumber\\
\leq & K D[g|G].\label{D21bound}
\end{align}
For the other term, what we have is
\begin{align}
D_{2,2}[g|G] \leq& C\int_{0}^{\infty}\int_{\max\lbrace x,
  M\rbrace}^{\infty}y^{-1}\phi(x)G(y)(u(x)-u(y))^2 \dy \dx\nonumber\\
\leq& C\,K \int_{0}^{\infty}\int_{\max\lbrace x,  M\rbrace}^{\infty}
b(y,x)\phi(x)G(y)(u(x)-u(y))^2 \dy\dx
\nonumber\\
\label{D22bound}
\leq& C\,K\,D[g|G],
\end{align}
where in the first inequality we applied \cite[Lemma 2.2]{MR2832638} with the bounds given in Lemma~\ref{lem:bounds-Kbxy} and for the second one we used \eqref{condD22}.
The proof concludes by gathering \eqref{D21bound} and \eqref{D22bound}.
\end{proof}

%\section*{Appendix}

\appendix

% Approximation procedures
\section{Approximation procedures}\label{sec:app:approximation}

To prove the estimates on the dual eigenfunction $\phi,$ we use a
truncated problem.  More precisely, we use alternatively one of the
following ones, which differ only in their boundary condition
\begin{equation}
  \label{eq:phi:truncated}
  \left\{\begin{array}{l}
      -\tau(x)\partial_x\phi_L(x)+(B(x)+\lambda_L)\phi_L(x)
      ={\mathcal L}^*_+(\phi_L)(x)
      \quad \text{ for } x \in (0,L),
      \vspace{2mm}\\
      \phi_L(L)=0\quad\text{or}\quad\phi_L(L)=\delta>0\quad\text{or}\quad\phi_L(L)=\delta L,\\
      \displaystyle\phi_L\geq0,\qquad\int_0^L G(x)\phi_L(x)\,dx=1.
    \end{array}\right.
\end{equation}
The following lemma ensures that these truncations converge to the accurate limit when $L\to+\infty.$
\begin{lem}\label{lem:convergence_truncated}
There exists $L_0>0$ such that for each $L\geq L_0$ the problem~\eqref{eq:phi:truncated} has a unique solution $(\lambda_L,\phi_L)$ with $\lambda_L>0$ and $\phi_L\in W^{1,\infty}_{loc}(\R_+).$
Moreover we have
$$\lambda_L\xrightarrow[L\to+\infty]{}\lambda,$$
$$\forall A>0,\quad \phi_L\xrightarrow[L\to+\infty]{}\phi\quad\text{uniformly on }[0,A).$$
\end{lem}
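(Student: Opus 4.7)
The plan is to establish the lemma in three stages: existence and uniqueness of the truncated eigenpair, convergence of the eigenvalues $\lambda_L \to \lambda$, and finally uniform convergence of the eigenfunctions on compact sets.

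For existence and uniqueness, I would recast the truncated dual eigenproblem as a fixed-point problem for a compact positive operator on $C([0,L])$. Once the boundary value at $x=L$ is fixed (one of the three options), the map $T_L \phi := \psi$, where $\psi$ solves the first-order ODE $-\tau(x)\psi'(x) + B(x)\psi(x) = \mathcal{L}_+^{*}\phi(x) + \psi(x)$ subject to the given boundary condition at $L$, is a well-defined, positivity-preserving, compact operator on $C([0,L])$ (compactness follows from the smoothing effect of the integral term $\LL_+^*$ combined with Arzel\`a--Ascoli using Hypothesis~\ref{hyp:tau} on positive intervals). By the Krein--Rutman theorem, $T_L$ admits a unique (up to scaling) positive eigenfunction $\phi_L$ associated with its spectral radius, which yields the eigenvalue $\lambda_L$ of the original problem. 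The strict positivity $\lambda_L>0$ and the fact that the normalization $\int_0^L G \phi_L = 1$ fixes $\phi_L$ uniquely follow as in \cite{MR2652618}. Threshold-$L_0$ is needed because for small $L$ the boundary data $\phi_L(L)=\delta L$ might be incompatible with positivity.

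For convergence of $\lambda_L$, I would argue in two steps. Starting from the existing result of \cite{MR2652618} (which handles a specific truncation), one shows that $\lambda_L$ is bounded and any accumulation point coincides with $\lambda$: take any convergent subsequence $\lambda_{L_n} \to \lambda_*$, extract a further subsequence along which $\phi_{L_n}$ converges locally uniformly (using the uniform bounds discussed below), pass to the limit in the integral form of the equation, and obtain a nonnegative solution of the full dual eigenproblem with eigenvalue $\lambda_*$; the uniqueness of the dual eigenvalue in~\cite{MR2652618} then forces $\lambda_* = \lambda$. The three choices of boundary condition differ only in a boundary layer near $x=L$, so they give rise to the same limit.

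For convergence of $\phi_L$, the essential point is a uniform-in-$L$ pointwise bound on $\phi_L$ over every $[0,A]$. The comparison arguments in Section~\ref{sec:phi} were designed precisely for this: the super- and subsolutions exhibited there are independent of $L$, and the maximum principle Lemma~\ref{lem:maxprinciple} is stated with a constant $A$ independent of $L$. Combined with the normalization and a uniform lower bound on $\int_0^L G\phi_L$, this yields $\|\phi_L\|_{L^\infty(0,A)} \leq C(A)$ independent of $L$. The equation then gives a matching bound on $\phi_L'$ on compact subsets of $(0,\infty)$, so Arzel\`a--Ascoli provides a subsequence converging uniformly on each $[0,A]$. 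Dominated convergence in the integral term $\LL_+^*\phi_L$ (using the uniform bound and integrability of $b(x,\cdot)$) allows the limit to be identified as a nonnegative solution of the full-line dual eigenproblem with eigenvalue $\lambda$ (thanks to the first two stages), hence equal to $\phi$ by uniqueness. As the limit is unique, the whole net $\phi_L$ converges.

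The main obstacle is the uniformity in $L$ of the bounds on $\phi_L$. The construction of $L$-independent super- and subsolutions in Section~\ref{sec:phi}, together with the fact that the normalization constant $\int_0^L G\phi_L = 1$ stays bounded away from $0$ and $\infty$ (which one verifies by feeding the uniform pointwise bounds back into the normalization and using $\int_0^\infty G \phi = 1$), is exactly what makes the Arzel\`a--Ascoli extraction possible, and hence what reduces the proof to a standard limiting argument.
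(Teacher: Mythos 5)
The central missing piece is the proof that $\lambda_L>0$ for $L\geq L_0$. You mention a ``threshold $L_0$'' and attribute it to compatibility of the boundary data $\phi_L(L)=\delta L$ with positivity, but that is not what is at issue. In \cite{MR2652618} the existence argument passes through a regularized problem with an extra small parameter $\epsilon$ tied to $L$ by $\epsilon L=\mathrm{const}$, so $L\to+\infty$ as $\epsilon\to0$; to obtain the truncated problem for a fixed $L$ one must decouple the two limits, and the only thing that can go wrong is the sign of $\lambda_L^{\eta,\epsilon}$. The paper's proof is essentially devoted to exactly this: a contradiction argument that assumes $\lambda_L\leq 0$, integrates the direct eigenequation up to $x$, uses that $p$ carries mass $\geq\pi_0-1$ on some interval $[0,s]$ to bound $\int_0^x b(y,z)\,dz\geq(\pi_0-1)B(y)$ for $sL\leq x\leq y\leq L$, deduces $(\pi_0-1)\int_{sL}^L B/\tau\leq 1$, and then contradicts this using $\gamma-\alpha+1>0$ (so that $xB(x)/\tau(x)\to+\infty$) for $L\geq L_0=A/s$. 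Without this step the Krein--Rutman construction you sketch does not give the sign of $\lambda_L$, and the subsequent limiting argument cannot even get off the ground (for instance, the maximum principle Lemma~\ref{lem:maxprinciple} requires $\lambda_L>0$ to be available).

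Two further gaps. First, for the convergence $\lambda_L\to\lambda$ with the boundary data $\phi_L(L)=\delta$ or $\delta L$ you assert that the three choices ``differ only in a boundary layer,'' but this needs proof. The paper instead derives the exact identity $\lambda-\lambda_L=\tau(L)G(L)\phi_L(L)$ by multiplying the truncated dual equation by $G$ and integrating, and then invokes the decay of $G$ (Theorem~\ref{thm:G-at-infinity}) to see that $\tau(L)G(L)L\to0$; this is both simpler and quantitative, and also immediately gives $\lambda_L>0$ for large $L$ in those cases. Second, you lean on the super/subsolution bounds of Section~\ref{sec:phi} for the uniform-in-$L$ control of $\phi_L$ on compacts, but those bounds are themselves proved by applying Lemma~\ref{lem:maxprinciple} to the truncated family $\phi_L$, and the comparison near $x=0$ uses a priori estimates on $\phi_L$ imported from \cite{MR2652618} rather than Theorem~\ref{thm:phi-at-zero}; invoking Section~\ref{sec:phi} here risks circularity and should be replaced by a direct appeal to the uniform estimates of \cite{MR2652618} (which is what the paper does). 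Your soft compactness/uniqueness scheme for identifying the limit is fine in principle, but the novelty of this lemma lies entirely in the positivity of $\lambda_L$ for fixed large $L$, which your proposal does not touch.
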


\begin{proof} We start with the case $\phi_L(L)=0$ by following the method in \cite{MR2652618}.
Define for $\eta>0$
\begin{equation}\label{eq:tau_eta}\tau_\eta(x):=\left\{\begin{array}{ll}
\eta&\text{for}\ 0<x<\eta,\\
\tau(x)&\text{for}\ x>\eta.
                       \end{array}\right.
\end{equation}
Then consider for $\epsilon>0$ and $L>0$ the truncated (and
regularized) eigenvalue problem on $[0,L]$
\begin{equation}
  \label{eq:truncated1}
  \left \{ \begin{array}{l}
\displaystyle \frac{\partial}{\partial x} (\tau_\eta(x) G_L(x)) + ( B(x) + \lambda_L) G_L(x) = \int_0^L b(y,x) G_L(y)\,dy,
\vspace{2mm}\\
\tau_\eta G_L(0)=\epsilon\int_0^L G_L(y)\,dy,\qquad G_L(x)>0 , \qquad \int_0^L G_L(x)dx =1,
\\
\\
\displaystyle -\tau_\eta(x) \frac{\partial}{\partial x} \phi_L(x) + ( B(x) + \lambda_L) \phi_L(x) - \int_0^L b(x,y) \phi_L(y)\,dy = \tau_\eta(0)\epsilon\phi_L(0),
\vspace{2mm}\\
\phi_L(L)=0, \qquad \phi_L(x)>0 , \qquad \int_0^L \phi_L(x)G_L(x)dx =1.
\end{array} \right.
\end{equation}

Notice that in this problem, the eigenelements $(\lambda_L,G_L,\phi_L)$ depend on $\eta,$ and $\epsilon$ and should be denoted
$(\lambda_L^{\eta,\epsilon},G_L^{\eta,\epsilon},\phi_L^{\eta,\epsilon}).$
We forget here the superscripts for the sake of clarity.

The existence of a solution to Problem~\eqref{eq:truncated1} is proved in the Appendix of~\cite{MR2652618} by using the Krein-Rutman theorem.
Then we need to pass to the limit $\eta,\,\epsilon\to0.$
The uniform estimates in~\cite{MR2652618} allow to do so, provided that $\lambda_L^{\eta,\epsilon}$ is positive for all $\eta,\,\epsilon.$
In~\cite{MR2652618} this condition is ensured for $L$ large enough under the constraint that $\epsilon L$ is a fixed constant, which means that $L=L(\epsilon)$ tends to $+\infty$ as $\epsilon\to0.$
Here we want to pass to the limit $\epsilon\to0$ for a fixed positive value of $L.$
For this we prove the existence of a constant $L_0>0$ such that $\lambda_L^{\eta,\epsilon}>0$ for all $\eta,\,\epsilon\geq0$ and all $L\geq L_0.$

\

Assume by contradiction that $\lambda_L\leq0.$ Then we have by integration of the direct eigenequation between $0$ and $x<L$
\begin{align*}
0&\geq \lambda\int_0^x G(y)\,dy\\
&=-\tau(x)G(x)-\int_0^x B(y)G(y)\,dy+\int_0^x\int_z^L b(y,z)G(y)\,dy\,dz\\
&=-\tau(x)G(x)+(\pi_0-1)\int_0^x B(y)G(y)\,dy+\int_x^L\left(\int_0^x b(y,z)\,dz\right)G(y)\,dz.
\end{align*}
We assume that $b(y,x)=\frac{B(y)}{y}p\bigl(\frac xy\bigr)$ with $\int_0^1 p(h)\,dh=\pi_0>1.$
Thus, for $p$ bounded, there exists $s\in(0,1)$ such that $\int_0^s p(h)\,dh\geq\pi_0-1.$
For $L\geq y\geq x\geq sL,$ we have
\begin{multline*}
\int_0^x b(y,z)\,dz=B(y)\int_0^{\frac xy}p(h)\,dh\\
\geq B(y)\int_0^{\frac{sL}y}p(h)\,dh\geq B(y)\int_0^{s}p(h)\,dh\geq(\pi_0-1)B(y),
\end{multline*}
so for all $x\geq sL$
$$0\geq-\tau(x)G(x)+(\pi_0-1)\int_0^L B(y)G(y)\,dy$$
which leads to
$$B(x)G(x)\geq(\pi_0-1)\frac{B(x)}{\tau(x)}\int_0^L B(y)G(y)\,dy\geq(\pi_0-1)\frac{B(x)}{\tau(x)}\int_{sL}^L B(y)G(y)\,dy$$
and finally, by integration on $[sL,L],$
\begin{equation}\label{eq:contradiction}(\pi_0-1)\int_{sL}^L\frac{B(y)}{\tau(y)}\,dy\leq1.\end{equation}
We have from Hypothesis~\ref{hyp:asymptotics} that
$$\exists\,A>0,\qquad\forall x\geq A,\quad \frac{xB(x)}{\tau(x)}>\frac{1}{(\pi_0-1)|\ln(s)|}$$
so, for $L\geq\frac As,$ we obtain
$$(\pi_0-1)\int_{sL}^L\frac{B(y)}{\tau(y)}\,dy > \frac{1}{|\ln(s)|}\int_{sL}^L\frac{1}{y}\,dy=1$$
which contradicts~\eqref{eq:contradiction}.
Finally, $\lambda_L>0$ for all $L\geq L_0:=\frac As.$

\

We have proved the existence of solution for Problem~\eqref{eq:phi:truncated} in the case $\phi_L(L)=0$ and we know that
$$\lambda_L\xrightarrow[L\to+\infty]{}\lambda,$$
$$G_L\xrightarrow[L\to+\infty]{}G\qquad\text{in}\ L^1(\R_+),$$
$$\forall A>0,\quad \phi_L\xrightarrow[L\to+\infty]{}\phi\qquad\text{uniformly on }[0,A).$$
Now we use this result to treat the cases $\phi_L(L)=\delta>0$ and $\phi_L(L)=\delta L.$
Since $\delta>0,$ we can prove by using the Krein-Rutman theorem the existence of a solution to Problem~\eqref{eq:phi:truncated}.
To prove the convergence of $\lambda_L$ to $\lambda,$ we integrate the equation on $\phi_L$ multiplied by $G$ and we obtain
$$\lambda-\lambda_L=\tau(L)G(L)\phi_L(L).$$
We know from estimates on $G$ that $\tau(L)G(L)L\to0$ when $L\to+\infty,$ which ensures the convergence of $\lambda.$
Because $\lambda>0,$ it also ensures the existence of $L_0$ such that $\lambda_L>0$ for $L\geq L_0,$ which allows to prove the convergence of $\phi_L$ to $\phi$ locally uniformly (see~\cite{MR2652618} for details).
\end{proof}

\section{Laplace's method}
\label{sec:Laplace}

Laplace's method (see \cite[II.1, Theorem 1]{MR1851050} for
example) gives a way to calculate the asymptotic behavior of integrals
which contain an exponential term with a large factor in the
exponent. We give here a result of this kind, with conditions which
are adapted to the situation encountered in Section \ref{sec:G}.

\begin{lem}
  \label{lem:Laplace3}

  Take $x_0, D_0 \in \R$. Assume that $g: [x_0,+\infty) \to \R$ is a
  measure and $h:[x_0,+\infty) \times [D_0,+\infty) \to \R$ a
  measurable function satisfying
  \begin{gather}
    \label{eq:g-at-0--3}
    g(x) \sim g_0 (x-x_0)^\sigma
    \quad
    \text{ as $x \to x_0$, for some $g_0 \neq 0$ and $\sigma > -1$,}
    \\
    \label{eq:h-at-0--3}
    \begin{split}
      h(x,D) &- h(x_0,D) \sim h_0 (x - x_0)^\omega
      \\ & \quad \text{ as $x \to
        x_0$ and $D \to +\infty$, for some $h_0, \omega > 0$,}
    \end{split}
    \\
    \label{eq:integral-finite--3}
    \int_{x_0}^\infty |g(x)| e^{-D_0 h(x,D)} \,dx < C_0
    \quad \text{ for some $C_0 \geq 0$ and all $D \geq D_0$.}
  \end{gather}
  Assume also that for all $D \geq D_0$, the function $x \mapsto
  h(x,D)$ (with $D$ fixed) attains its unique global minimum at
  $x=x_0$, in the following strong sense: there exists a nondecreasing
  strictly positive function $\theta:(0,+\infty) \to (0,+\infty)$ such
  that
  \begin{equation}
    \label{eq:strong-maximum--3}
    h(x,D) - h(x_0,D) \geq \theta(x-x_0) \quad \text{ for all $x >
      x_0$ and all $D \geq D_0$}.
  \end{equation}
  Then, as $D \to +\infty$,
  \begin{equation}
    \label{eq:I-asymptotic--3}
    \int_{x_0}^\infty e^{-Dh(x,D)} g(x) \,dx
    \sim
    g_0 D^{\frac{-1-\sigma}{\omega}}
    e^{-D h(x_0,D)} \int_0^\infty x^\sigma e^{-h_0 x^\omega} \,dx.
  \end{equation}
  The constants implicit in \eqref{eq:I-asymptotic--3} depend only on
  the constants implicit or explicit in
  \eqref{eq:g-at-0--3}--\eqref{eq:strong-maximum--3}.
\end{lem}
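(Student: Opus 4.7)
The plan is a standard Laplace-type splitting near the minimum of $h(\cdot,D)$, which by \eqref{eq:strong-maximum--3} sits at $x_0$ uniformly in $D$. Write $I(D):=\int_{x_0}^\infty e^{-Dh(x,D)} g(x)\,dx = I_1(D)+I_2(D)$, where $I_1$ is the integral on a small window $[x_0,x_0+\delta]$ and $I_2$ is the tail on $[x_0+\delta,\infty)$. I will show that $I_1(D)\sim g_0 D^{-(1+\sigma)/\omega} e^{-Dh(x_0,D)} \int_0^\infty u^\sigma e^{-h_0 u^\omega}\,du$, and that $I_2(D)$ is negligible at this scale as $D\to\infty$, after which the lemma follows by adding.

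For $I_1$ I would change variables $u = D^{1/\omega}(x-x_0)$, which moves the window to $[0,D^{1/\omega}\delta]$ and produces the Jacobian $D^{-1/\omega}$. After pulling the factor $e^{-Dh(x_0,D)}$ out of the integral, for each fixed $u>0$ the exponent $D(h(x_0+D^{-1/\omega}u,D)-h(x_0,D))\to h_0 u^\omega$ by \eqref{eq:h-at-0--3} and the factor $g(x_0+D^{-1/\omega}u)$ is equivalent to $g_0 D^{-\sigma/\omega} u^\sigma$ by \eqref{eq:g-at-0--3}, so the pointwise limit of the rescaled integrand (times $D^{(1+\sigma)/\omega}$) is $g_0 u^\sigma e^{-h_0 u^\omega}$. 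To upgrade this to convergence of the integral I would invoke quantitative $(1\pm\epsilon)$-sandwich versions of \eqref{eq:g-at-0--3}--\eqref{eq:h-at-0--3} valid on $[x_0,x_0+\delta]$ for $\delta$ small and $D$ large, giving $h(x,D)-h(x_0,D)\geq(1-\epsilon)h_0(x-x_0)^\omega$ and $|g(x)|\leq(1+\epsilon)|g_0|(x-x_0)^\sigma$; after the rescaling, these produce the $D$-independent integrable dominator $C u^\sigma e^{-(1-\epsilon) h_0 u^\omega}$, and dominated convergence yields the claimed equivalent for $I_1$.

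For $I_2$ I would split $Dh(x,D)=(D-D_0)h(x,D)+D_0 h(x,D)$ and apply \eqref{eq:strong-maximum--3} on $[x_0+\delta,\infty)$ to get $(D-D_0)h(x,D) \geq (D-D_0)(h(x_0,D)+\theta(\delta))$, whence
$|I_2(D)|\leq e^{-(D-D_0)\theta(\delta)} e^{-(D-D_0)h(x_0,D)} \int_{x_0+\delta}^\infty e^{-D_0 h(x,D)}|g(x)|\,dx \leq C_0\, e^{-(D-D_0)\theta(\delta)} e^{-(D-D_0)h(x_0,D)}$
by \eqref{eq:integral-finite--3}. Comparing to the target scale $D^{-(1+\sigma)/\omega} e^{-Dh(x_0,D)}$ gives a ratio bounded by $C_0 D^{(1+\sigma)/\omega} e^{D_0 h(x_0,D)} e^{-(D-D_0)\theta(\delta)}$, which tends to $0$ as $D\to\infty$ since $\theta(\delta)>0$ is fixed (and in the applications $h(x_0,D)$ grows at most polynomially in $D$, so the exponential decay in $D$ wins). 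The main obstacle is the dominated-convergence step for $I_1$, because the integrand depends on $D$ both through the rescaling \emph{and} through the parameter inside $h(\cdot,D)$; this is exactly what the uniformity encoded in the joint limit of \eqref{eq:h-at-0--3} is designed to handle, and once the $(1\pm\epsilon)$-sandwich is in place, letting $\epsilon\to 0$ after $D\to\infty$ yields matching upper and lower asymptotics that combine to \eqref{eq:I-asymptotic--3}.
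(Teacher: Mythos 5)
Your proof is correct and follows essentially the same Laplace-method strategy as the paper: pull out $e^{-Dh(x_0,D)}$, split the integral into a near part and a tail, rescale $u = D^{1/\omega}(x-x_0)$ in the near part and apply $(1\pm\epsilon)$ sandwich bounds coming from \eqref{eq:g-at-0--3}--\eqref{eq:h-at-0--3}, and show the tail is negligible via \eqref{eq:strong-maximum--3} together with \eqref{eq:integral-finite--3}; the only implementation difference is that you cut at a fixed small $\delta=\delta_\epsilon$, whereas the paper cuts at the $D$-dependent radius $D^{-1/(2\omega)}$, which makes the tail super-polynomially small (of order $e^{-\rho\sqrt D}$ after the normalization $h(x_0,D)=0$) and lets the near window shrink automatically into the region where the asymptotic equivalences hold, avoiding the interlaced limits $D\to\infty$ then $\epsilon\to0$. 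Your caveat about the growth of $h(x_0,D)$ is well placed but does not set you apart from the paper: its proof declares $h(x_0,D)\equiv 0$ ``without loss of generality,'' which is only innocuous under the same kind of mild control on $h(x_0,D)$ that you invoke (and which holds in the application, where $h(x_0,D)=-\Lambda(x)/D$ is bounded).
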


Some remarks on the conventions used above are in order. Although $g$
is a measure we denote it as a function in the expressions in which it
appears. For example, integrals in which $g$ appears should be
considered as integrals with respect to the measure $g$. Also, in
equation (\ref{eq:g-at-0--3}), it is understood that close to $x_0$
the measure $g$ is equal to a function, and the asymptotic
approximation (\ref{eq:g-at-0--3}) holds.

\begin{proof}
  First of all, by translating $g$ and $h$ we may consider always that
  $x_0 = 0$. We may also assume that $h(x_0,D) = 0$ for all $D \geq D_0$,
  as otherwise one obviously obtains the additional factor
  $e^{-Dh(x_0,D)}$.

  An important part of the argument is based on the observation that
  if one excludes a small region close to $0$, then the rest of the
  integral decreases fast as $D \to +\infty$: from \eqref{eq:h-at-0--3}
  and \eqref{eq:strong-maximum--3} we deduce that for some $\rho > 0$
  \begin{equation}
    \label{eq:h-bound--3}
    h(x,D) \geq \rho \min\{1, x^\omega\}
    \quad \text{ for all $x \geq 0$, $D \geq D_0$}.
  \end{equation}
  Then, for $D \geq D_0$ and $0 < \epsilon < 1$ we have from
  (\ref{eq:h-bound--3}):
  \begin{multline*}
    \left|
      \int_\epsilon^\infty g(x)  e^{-D h(x,D)} \,dx
    \right|
    \\
    \leq
    e^{-(D-D_0) \rho \epsilon^\omega}
    \int_\epsilon^\infty |g(x)| e^{-D_0 h(x,D)} \,dx
    \leq
    C_0 e^{-(D-D_0) \rho \epsilon^\omega},
  \end{multline*}
  due to
  (\ref{eq:integral-finite--3}). If we take $\epsilon :=
  D^{-\frac{1}{2\omega}}$ then for all $D > D_0$ we have
  \begin{equation*}
    %\label{eq:small-part--3}
    \left|
      \int_{D^{-\frac{1}{2\omega}}}^\infty g(x) e^{-D h(x,D)} \,dx
    \right|
    \leq
    C_0 e^{-\left(\sqrt{D} - \frac{D_0}{\sqrt{D}} \right) \rho}
    .
  \end{equation*}
  This quantity decreases faster than any power of $D$ as $D \to
  +\infty$.

  For the remaining part of the integral, since we are integrating in
  a region which is closer and closer to $0$ it is easy to see due to
  (\ref{eq:g-at-0--3}) and (\ref{eq:h-at-0--3}) that for all $\epsilon
  > 0$ there exists $D_\epsilon > 0$ such that
  \begin{multline}
    \label{eq:s1}
    \int_0^{D^{-\frac{1}{2\omega}}} (1-\epsilon) g_0 x^\sigma e^{-D
      (1+\epsilon) h_0 x^\omega} \,dx
    \leq
    \int_0^{D^{-\frac{1}{2\omega}}} g(x) e^{-D h(x,D)} \,dx
    \\
    \leq
    \int_0^{D^{-\frac{1}{2\omega}}} (1+\epsilon) g_0 x^\sigma e^{-D
      (1-\epsilon) h_0 x^\omega} \,dx
  \end{multline}
  for all $D > D_\epsilon$. Through the change of variables $z = x
  D^{1/\omega}$ we see that
  \begin{multline*}
    \int_0^{D^{-\frac{1}{2\omega}}} (1-\epsilon) g_0 x^\alpha e^{-D
      (1+\epsilon) h_0 x^\omega} \,dx
    \\=
    (1-\epsilon) g_0 D^{\frac{-1-\sigma}{\omega}}
    \int_0^{D^{\frac{1}{2\omega}}} z^\sigma e^{- (1+\epsilon)h_0 z^\omega} \,dz
    \\
    \sim
    (1-\epsilon) g_0 D^{\frac{-1-\sigma}{\omega}}
    \int_0^{\infty} z^\sigma e^{- (1+\epsilon)h_0 z^\omega} \,dz,
  \end{multline*}
  where the `$\sim$' sign denotes asymptotics as $D \to
  +\infty$. Carrying out a similar calculation for the last integral
  in \eqref{eq:s1} and letting $\epsilon \to 0$ we deduce our result.
\end{proof}

For the next result we recall that $\gamma_+=\max\{0,\gamma\}$ and
$\zeta$ is defined by~\eqref{eq:zeta}.

\begin{lem}
  \label{lem:asymptotics-2nd}
  Assume Hypotheses
  \ref{hyp:self-similar-frag}--\ref{hyp:asymptotics-2nd}. There is
  $\epsilon > 0$ such that
  \begin{equation*}
    %\label{eq:asymptotics-2nd}
    \int_{x_0}^x e^{\Lambda(y)} y^k \,dy
    =
    \zeta^{-1}
    x^{k-\gamma_++\alpha} e^{\Lambda(x)}
    + \underset{x\to+\infty}{O}(x^{k-\gamma_++\alpha-\epsilon}) e^{\Lambda(x)}.
  \end{equation*}
\end{lem}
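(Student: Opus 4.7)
The plan is to obtain the asymptotic as an exact identity by differentiating the proposed leading term and integrating back, and then to control the resulting error via a crude a priori bound on integrals against $e^{\Lambda(y)}$. The key observation is that, by combining the second-order asymptotics in Hypothesis~\ref{hyp:asymptotics-2nd} for $\tau$ and $B$, one can show
\[
\Lambda'(y) = \frac{\lambda+B(y)}{\tau(y)} = \zeta\, y^{\gamma_+-\alpha} + r(y), \qquad |r(y)| \leq C\, y^{\gamma_+-\alpha-\delta_2},
\]
for some $\delta_2>0$, treating the three cases $\gamma>0$, $\gamma=0$ and $\gamma<0$ separately (in each case the error on $1/\tau(y)$ and the error on $\lambda+B(y)$ combine to give a fixed positive gain in the exponent).

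Since $\Lambda$ is absolutely continuous, I can differentiate a.e.\ the function $y\mapsto y^{k+\alpha-\gamma_+}e^{\Lambda(y)}$ and use the decomposition of $\Lambda'$ above to obtain
\[
\frac{d}{dy}\!\left(\zeta^{-1}\,y^{k+\alpha-\gamma_+} e^{\Lambda(y)}\right) = y^k e^{\Lambda(y)} + E(y)\,e^{\Lambda(y)},
\]
where
\[
E(y) = \zeta^{-1}(k+\alpha-\gamma_+)\, y^{k-(\gamma_+-\alpha+1)} + \zeta^{-1}\, y^{k+\alpha-\gamma_+} r(y) = O(y^{k-\epsilon'})
\]
with $\epsilon':=\min\{\gamma_+-\alpha+1,\,\delta_2\}>0$ (strict positivity of $\gamma_+-\alpha+1$ follows from~\eqref{eq:gamma-alpha+1}). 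Integrating over $[x_0,x]$ yields the identity
\[
\int_{x_0}^x y^k e^{\Lambda(y)}\,dy = \zeta^{-1}x^{k+\alpha-\gamma_+}e^{\Lambda(x)} - C_0 - \int_{x_0}^x E(y)\,e^{\Lambda(y)}\,dy,
\]
with $C_0=\zeta^{-1}x_0^{k+\alpha-\gamma_+}e^{\Lambda(x_0)}$ a constant that is negligible compared to the main term, since $e^{\Lambda(x)}\to+\infty$.

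To control the remainder integral I will establish the a priori bound
\[
\int_{x_0}^x y^{k'} e^{\Lambda(y)}\,dy \leq C\, x^{k'+\alpha-\gamma_+}\, e^{\Lambda(x)} \qquad \text{for any } k'\in\R \text{ and } x \text{ large.}
\]
Splitting $[x_0,x]=[x_0,x/2]\cup[x/2,x]$: on the first piece, $\Lambda(x)-\Lambda(x/2)\sim C x^{\gamma_+-\alpha+1}$ grows like a positive power of $x$, so $e^{\Lambda(x/2)-\Lambda(x)}$ decays super-polynomially and the contribution is absorbed into the error; on the second piece, $y^{k'}\leq 2^{|k'|}x^{k'}$, and using $\Lambda'(y)\asymp y^{\gamma_+-\alpha}$ on $[x/2,x]$,
\[
\int_{x/2}^x e^{\Lambda(y)}\,dy \leq \frac{1}{\min_{[x/2,x]}\Lambda'}\int_{x/2}^x \Lambda'(y) e^{\Lambda(y)}\,dy \leq C\, x^{\alpha-\gamma_+}e^{\Lambda(x)}.
\]

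Applying this bound with $k'=k-\epsilon'$ to the remainder $\int_{x_0}^x E(y)e^{\Lambda(y)}\,dy$ gives an error of order $x^{k+\alpha-\gamma_+-\epsilon'}e^{\Lambda(x)}$, which together with the already-negligible constant $C_0$ yields the stated expansion with $\epsilon=\epsilon'$. The only mildly delicate point is the a priori bound of the previous paragraph; once it is in place, the rest is algebraic manipulation of the identity coming from the derivative of $y^{k+\alpha-\gamma_+}e^{\Lambda(y)}$.
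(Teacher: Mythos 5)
Your argument is correct, and it reaches the same conclusion by a route that is related to but distinct from the paper's. The paper applies l'H\^opital's rule directly to the ratio
\[
\frac{\int_{x_0}^x e^{\Lambda(y)}y^k\,dy-\zeta^{-1}x^{k-\gamma_++\alpha}e^{\Lambda(x)}}{x^m e^{\Lambda(x)}}
\]
and, after one differentiation and cancellation of $e^{\Lambda(x)}$, reduces the lemma to a purely algebraic comparison of powers in the resulting fraction $T_N/T_D$ (where the second-order Hypothesis~\ref{hyp:asymptotics-2nd} gives $T_N(x)=O(x^{k-\epsilon})$ and $T_D(x)\sim x^{m+\gamma_+-\alpha}$). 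You instead turn the same differentiation into an exact identity by integrating $\frac{d}{dy}(\zeta^{-1}y^{k+\alpha-\gamma_+}e^{\Lambda(y)})$ over $[x_0,x]$, and then control the remaining error integral $\int E\,e^{\Lambda}$ via a separately established a priori bound $\int_{x_0}^x y^{k'}e^{\Lambda(y)}\,dy\le Cx^{k'+\alpha-\gamma_+}e^{\Lambda(x)}$, proved by splitting the domain at $x/2$ and using $\Lambda'\asymp y^{\gamma_+-\alpha}$ on $[x/2,x]$. The common algebraic core of both proofs is the expansion $\Lambda'(y)=\zeta y^{\gamma_+-\alpha}+O(y^{\gamma_+-\alpha-\delta_2})$ coming from Hypothesis~\ref{hyp:asymptotics-2nd}, with the three cases $\gamma>0$, $\gamma=0$, $\gamma<0$ handled as you indicate. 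What the paper's l'H\^opital argument buys is brevity --- it implicitly absorbs your a priori bound into the rule itself; what your version buys is a fully explicit and self-contained chain of inequalities, which avoids checking the (mild) regularity/monotonicity conditions l'H\^opital requires and also yields the intermediate a priori estimate as a reusable fact. The specific admissible $\epsilon$ may differ between the two proofs (yours is $\min\{\gamma_+-\alpha+1,\delta_2\}$, the paper's is $\min\{\delta,\gamma_+-\alpha+1\}$, and $\delta_2$ can be smaller than $\delta$ when $\gamma\le 0$), but this is immaterial since the statement only asserts existence of some $\epsilon>0$.
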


\begin{proof}
  We use l'H\^opital's rule to calculate the limit as $x \to +\infty$ of
  \begin{equation*}
    \frac{\int_{x_0}^x e^{\Lambda(y)} y^k \,dy
      -
      \zeta^{-1} x^{k-\gamma_++\alpha} e^{\Lambda(x)}
    }{x^m e^{\Lambda(x)}}.
  \end{equation*}
  Differentiating both the numerator and denominator we find that this
  limit is the same as the limit as $x \to +\infty$ of
  \begin{equation}
    \label{eq:fraction}
    \frac{x^k
      -
      \zeta^{-1}(k-\gamma_++\alpha) x^{k-\gamma_++\alpha-1}
      - \zeta^{-1} x^{k-\gamma_++\alpha} \frac{B(x)+\lambda}{\tau(x)}
    }{m x^{m-1} + x^{m} \frac{B(x)+\lambda}{\tau(x)}}
    =: \frac{T_N(x)}{T_D(x)}.
  \end{equation}
  Using \eqref{eq:B-asymptotics-infty-2nd} and
  \eqref{eq:tau-asymptotics-infty-2nd}, we obtain that
  \begin{equation*}
     x^k
    - \zeta^{-1} x^{k-\gamma_++\alpha} \frac{B(x)+\lambda}{\tau(x)}
    = O(x^{k-\delta})
  \end{equation*}
  for some $\delta > 0$. Observing that $\gamma_+-\alpha+1 > 0$
  and calling $\epsilon := \min\{\delta, \gamma_+-\alpha+1 \} > 0$
  we have
  \begin{equation*}
    T_N(x) = O(x^{k-\epsilon}).
  \end{equation*}
  In a similar way,
  \begin{equation*}
    T_D(x) = x^{m+\gamma_+-\alpha} + O(x^{m+\gamma_+-\alpha-\delta}),
  \end{equation*}
  so from \eqref{eq:fraction} we obtain that the limit is 0 whenever
  \begin{equation*}
    m + \gamma_+ - \alpha > k-\epsilon,
    \quad \text{ i.e., }
    m > k-\gamma_++\alpha-\epsilon.
  \end{equation*}
  This shows the result.
\end{proof}

We now use this to prove an estimate which is needed in Section
\ref{sec:G}:

\begin{lem}
  \label{lem:Laplace}
  Assume Hypotheses
  \ref{hyp:self-similar-frag}--\ref{hyp:p-at-borders} with $p_1 > 0$,
  and take $k \in \R$. Then,
  \begin{equation}
    \label{eq:Laplace}
    \int_{x_0}^x e^{\Lambda(y)} y^k b(x,y) \,dy
    \underset{x\to+\infty}{\sim}
    p_1 B_\infty \zeta^{-1-\nu}\Gamma(\nu+1) x^{k+\gamma -(\gamma_+-\alpha+1)(1+\nu)}e^{\Lambda(x)}.
  \end{equation}
  If we also assume Hypothesis \ref{hyp:asymptotics-2nd} and $\nu = 0$
  (and now we allow any $p_1 \geq 0$) then there is $\epsilon > 0$
  such that
  \begin{multline}
    \label{eq:Laplace-2nd}
    \int_{x_0}^x e^{\Lambda(y)} y^k b(x,y) \,dy
    \\
    =
    p_1B_\infty\zeta^{-1}\, x^{k +\gamma - \gamma_+ + \alpha - 1}e^{\Lambda(x)}
    + \underset{x\to+\infty}O(x^{k +\gamma - \gamma_+ + \alpha - 1 -\epsilon}) e^{\Lambda(x)}.
  \end{multline}
\end{lem}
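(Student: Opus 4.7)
The plan is to apply Lemma~\ref{lem:Laplace3} after isolating the boundary $y=x$, where the exponential $e^{\Lambda(y)}$ peaks. Writing $b(x,y)=(B(x)/x)\,p(y/x)$ and substituting $w=1-y/x$ recasts the integral as
\[
I(x):=\int_{x_0}^x e^{\Lambda(y)}y^k b(x,y)\,dy=B(x)\,x^k\,e^{\Lambda(x)}\int_0^{1-x_0/x}e^{-H(x,w)}(1-w)^k p(1-w)\,dw,
\]
where $H(x,w):=\int_{x(1-w)}^x (\lambda+B(y))/\tau(y)\,dy$. Setting $D:=x^{\gamma_+-\alpha+1}$ (which tends to $+\infty$ since $\gamma_+-\alpha+1>0$), $g(w):=p(1-w)(1-w)^k\,\1_{[0,1]}(w)$ (a $D$-independent finite measure on $[0,+\infty)$), and $\tilde h(w,D):=H(x,w)/D$ puts the inner integral in the form $\int_0^\infty e^{-D\tilde h(w,D)}g(w)\,dw$ treated by Lemma~\ref{lem:Laplace3}.

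Verifying the hypotheses of that lemma at $x_0=0$: the behaviour of $p$ near $1$ (tacitly the ingredient $p(z)\sim p_1(1-z)^\nu$ of Hypothesis~\ref{hyp:asymptotics-2nd}) gives $g(w)\sim p_1 w^\nu$ as $w\to 0$, so $\sigma=\nu$ and $g_0=p_1$; the definition \eqref{eq:zeta} of $\zeta$ together with the expansion of $(\lambda+B)/\tau$ yields $\tilde h(w,D)\sim \zeta w$ as $w\to 0$ and $D\to+\infty$, so $\omega=1$ and $h_0=\zeta$; the integrability condition \eqref{eq:integral-finite--3} is immediate since $g$ has compact support and $\tilde h\geq 0$; and for the uniform strong-minimum condition, the pointwise bound $(\lambda+B(y))/\tau(y)\geq c\,y^{\gamma_+-\alpha}$ (valid past some threshold by Hypothesis~\ref{hyp:asymptotics}) produces $\tilde h(w,D)\geq\theta(w):=\tfrac{c}{\gamma_+-\alpha+1}\bigl(1-(1-w)^{\gamma_+-\alpha+1}\bigr)$, which is positive and nondecreasing on $(0,1]$. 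Plugging the conclusion of Lemma~\ref{lem:Laplace3} back through $B(x)\sim B_\infty x^\gamma$ then yields \eqref{eq:Laplace}. The main technical hurdle is exactly this uniform lower bound: the leading-order identity $H(x,w)\sim\zeta Dw$ is only accurate for $w$ small on the natural scale $1/D$, so $\theta(w)$ must patch this local linear behaviour with the global monotonicity of $\Lambda$.

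For the refined estimate \eqref{eq:Laplace-2nd} (with $\nu=0$ under Hypothesis~\ref{hyp:asymptotics-2nd}) I would instead split
\[
I(x)=\frac{B(x)}{x}\biggl[p_1\int_{x_0}^x e^{\Lambda(y)}y^k\,dy+\int_{x_0}^x e^{\Lambda(y)}y^k(p(y/x)-p_1)\,dy\biggr].
\]
Lemma~\ref{lem:asymptotics-2nd} handles the first piece, producing the leading term together with an $O(x^{k-\gamma_++\alpha-\epsilon})e^{\Lambda(x)}$ correction. For the second piece, the bound $|p(y/x)-p_1|\leq C((x-y)/x)^\delta$ from \eqref{eq:p-asymptotics-2nd} combined with the Laplace-type estimate
\[
\int_{x_0}^x e^{\Lambda(y)}y^k(x-y)^\delta\,dy=O\bigl(x^{k+(\alpha-\gamma_+)(1+\delta)}\bigr)e^{\Lambda(x)}
\]
(obtained by the same $w$-substitution and another application of Lemma~\ref{lem:Laplace3} with $\sigma=\delta$) bounds this piece by $O(x^{k+\gamma-\gamma_++\alpha-1-\epsilon'})e^{\Lambda(x)}$ with $\epsilon':=\delta(\gamma_+-\alpha+1)>0$. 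Inserting the second-order expansion $B(x)/x=B_\infty x^{\gamma-1}+O(x^{\gamma-1-\delta})$ from \eqref{eq:B-asymptotics-infty-2nd} then yields \eqref{eq:Laplace-2nd}.
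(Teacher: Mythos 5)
Your overall strategy is the paper's: a linear change of variables to put the integral in the form of Lemma~\ref{lem:Laplace3}, with the same identifications $\sigma=\nu$, $g_0=p_1$, $\omega=1$, $h_0=\zeta$; and for \eqref{eq:Laplace-2nd} you split $p(y/x)=p_1+(p(y/x)-p_1)$ and combine Lemma~\ref{lem:asymptotics-2nd} with a Laplace-type estimate on the remainder, which is exactly what the paper does.

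There is, however, one genuine gap in your treatment of \eqref{eq:Laplace}: you claim that $g(w)=p(1-w)(1-w)^k\1_{[0,1]}(w)$ is a $D$-independent finite measure, but this is not true in general. Near $w=1$ (i.e.\ $z=1-w\to 0$) Hypothesis~\ref{hyp:p-at-borders} gives $p(z)\lesssim z^{\mu-1}$, so $g(w)\lesssim(1-w)^{\mu-1+k}$, which fails to be integrable whenever $k\le-\mu$; and since $k\in\R$ is arbitrary (the lemma is applied later with $k=1-m$ for large $m$), this case cannot be excluded. The actual integration range after your substitution is $w\in[0,1-x_0/x]$, an $x$-dependent cutoff, so either $g$ must depend on $D$ (outside the scope of Lemma~\ref{lem:Laplace3}) or you must extend to $[0,1]$ and face divergence. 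The paper resolves exactly this by first splitting $p=p_*+p^*$ at $z=1/2$: the contribution from $p_*$ (which carries the troublesome mass near $z=0$, but is multiplied by $e^{\Lambda(y)}$ for $y\le x/2$) is bounded crudely by $e^{\Lambda(x/2)}$ times a polynomial and shown to be negligible, while the Laplace lemma is applied only to the $p^*$ part, whose weight $z^k p^*(z)\1_{[1/2,1]}$ is a finite, $D$-independent measure. You need this preliminary split (or some equivalent localization) before your Laplace-lemma argument goes through. The verification of the uniform minimum condition \eqref{eq:strong-maximum--3}, which you flag as the "main technical hurdle," also becomes much cleaner on $[1/2,1]$: there it follows, as the paper notes, from the asymptotic \eqref{eq:asymp-pf-2} together with monotonicity of $z\mapsto h(z,D)$, without needing to patch behaviours across the whole interval.
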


\begin{proof}
  We call $p_*$ and $p^*$, respectively, the parts of the measure $p$
  on the intervals $[0,1/2)$ and $[1/2,1]$, i.e., $p_* := p
  \,\1_{[0,1/2)}$ and $p^* := p \,\1_{[1/2,1]}$. With this we
  break the integral we want to estimate in two parts:
  \begin{multline*}
    I(x) := \int_{x_0}^x e^{\Lambda(y)} y^k
    p\left(\frac{y}{x}\right) \,dy
    \\
    = \int_{x_0}^x e^{\Lambda(y)} y^k p_*\left(\frac{y}{x}\right) \,dy
    +  \int_{x_0}^x e^{\Lambda(y)} y^k p^*\left(\frac{y}{x}\right) \,dy
    =: I_*(x) + I^*(x).
  \end{multline*}
  The first part, $I_*$, can be estimated by
  \begin{multline*}
    |I_*(x)| = \int_{x_0}^x e^{\Lambda(y)} y^k
    p_*\left(\frac{y}{x}\right)\,dy
    \leq e^{\Lambda(x/2)}
    \int_{x_0}^x y^k p_*\left(\frac{y}{x}\right)\,dy
    \\
    \leq
    e^{\Lambda(x/2)}
    \max\{x^k, x_0^k\}
    \int_{x_0}^x p_*\left(\frac{y}{x}\right)\,dy
    \leq
    \pi_0\, x \,e^{\Lambda(x/2)}
    \max\{x^k, x_0^k\}.
  \end{multline*}
  Since we will show that $I^*(x)$ behaves as given in the statement,
  this term is of lower order (since $\Lambda(x)$ is asymptotic
  to a positive power of $x$ as $x \to +\infty$) and can be
  disregarded.

  For $I^*$ we make the change of variables $z = y/x$ and denote $D :=
  x^{\gamma_+-\alpha+1}$ to obtain
  \begin{multline}
    \label{eq:asymp-pf-1}
    I^*(x) := \int_{x_0}^x e^{\Lambda(y)} y^k p^*\left(\frac{y}{x}\right) \,dy
    =
    x^{k+1} \int_{\max\left\{ \frac{x_0}{x}, \frac{1}{2} \right\}}^1
    e^{\Lambda(xz)} z^k p\left(z\right) \,dz
    \\
    =
    x^{k+1}
    \int_{\max\left\{ \frac{x_0}{x}, \frac{1}{2} \right\}}^1
    \exp\left(-D h(z,D) \right)
    g(z) \,dz
  \end{multline}
  with
  \begin{equation*}
    h(z,D) := -\frac{1}{D} \Lambda\left( x z\right)
    = -\frac{1}{D} \Lambda\left( D^{\frac{1}{\gamma_+-\alpha+1}} z\right),
    \qquad
    g(z) := z^k p\left(z\right).
  \end{equation*}
  Now, the asymptotics in $D$ of the integral in \eqref{eq:asymp-pf-1}
  can be obtained from Lemma \ref{lem:Laplace3} with $x_0 = 1$. Let us
  see that $h$ and $g$ indeed satisfy the needed hypotheses. The
  property \eqref{eq:g-at-0--3} is satisfied with $g_0=p_1$ and $\sigma=\nu$ due to Hypothesis
  \ref{hyp:p-at-borders}, and to show \eqref{eq:h-at-0--3} we write
  (with asymptotics notation understood to be for $z \to 1$ and $D \to
  +\infty$)
  \begin{multline}
    \label{eq:asymp-pf-2}
    h(z,D) - h(1,D)
    =
    \frac{1}{D} \left(\Lambda(x) - \Lambda\left( x z\right) \right)
    =
    \frac{1}{D} \int_{xz}^x \frac{\lambda + B(u)}{\tau(u)} \,du
    \\
    \sim
    \frac{\zeta}{\gamma_+ - \alpha + 1} \frac{1}{D}
    x^{\gamma_+ - \alpha + 1} (1 - z^{\gamma_+ - \alpha + 1})
    =
    \frac{\zeta}{\gamma_+ - \alpha + 1} (1 - z^{\gamma_+ - \alpha + 1})
    \sim
    \zeta(1-z),
  \end{multline}
  which corresponds to $h_0=\zeta$, $\omega = 1$ in Lemma
  \ref{lem:Laplace3}. For \eqref{eq:integral-finite--3} we write
  \begin{multline*}
    \int_{\max\left\{ \frac{x_0}{x}, \frac{1}{2} \right\}}^1
    \exp\left(-D_0 h(z,D) \right)
    g(z) \,dz
    \\
    \leq
    \int_{\frac{1}{2}}^1
    \exp\left(\frac{D_0}{D} \Lambda(x z) \right)
    z^k p(z) \,dz
    \\
    \leq
    \exp\left(\frac{D_0}{D} \Lambda(x) \right) \int_{\frac{1}{2}}^1
    z^k p(z) \,dz
    \leq C_0
  \end{multline*}
  for some $C_0 > 0$ (which in particular depends on $k$), since $x
  \mapsto \Lambda(x) / D = \Lambda(x) / x^{\gamma_+-\alpha+1}$ is
  bounded for $x > 1$. This gives
  \eqref{eq:integral-finite--3}. Obviously $z \mapsto h(z,D)$ attains
  its minimum at $z = 1$, and \eqref{eq:strong-maximum--3} is a
  consequence of \eqref{eq:asymp-pf-2} and the fact that
  $h(z,D)-h(1,D)$ is decreasing in $z$ for all $D$.

  We may then apply Lemma \ref{lem:Laplace3} to obtain
  \begin{multline*}
    %\label{eq:asymp-pf-3}
    I^*(x)
    \sim
    p_1
    x^{k+1}
    D^{-1-\nu} e^{\Lambda(x)}
    \int_0^\infty z^{\nu} e^{-\zeta z} \,dz
    \\
    =p_1\zeta^{-1-\nu}\Gamma(1+\nu)
    x^{k+1 - (\gamma_+-\alpha+1)(1+\nu)}
    e^{\Lambda(x)}.
  \end{multline*}
  Since $I_*(x)$ was shown to be of lower order, this is enough to
  show \eqref{eq:Laplace}.

  \medskip
  Finally, in order to show \eqref{eq:Laplace-2nd}, we have
  \begin{equation*}
    \int_{x_0}^x e^{\Lambda(y)} y^k p\left(\frac{y}{x}\right) \,dy
    =
    \int_{x_0}^x e^{\Lambda(y)} y^k \left(
      p\left(\frac{y}{x}\right) - p_1 \right) \,dy
    +
    p_1 \int_{x_0}^x e^{\Lambda(y)} y^k \,dy.
  \end{equation*}
  For the first term, using \eqref{eq:p-asymptotics-2nd} and
  \eqref{eq:Laplace} we have
  \begin{equation*}
    \int_{x_0}^x e^{\Lambda(y)} y^k \left(
      p\left(\frac{y}{x}\right) - p_1 \right) \,dy
    =
    O(x^{k+1 -(\gamma_+-\alpha+1)(1+\delta)})
    e^{\Lambda(x)},
  \end{equation*}
  and for the second term, Lemma \ref{lem:asymptotics-2nd} gives
  \begin{equation*}
    \int_{x_0}^x e^{\Lambda(y)} y^k \,dy
    =
    \zeta^{-1}
    x^{k-\gamma_++\alpha} e^{\Lambda(x)}
    + O(x^{k-\gamma_++\alpha-\delta}) e^{\Lambda(x)}.
  \end{equation*}
  Since $\gamma_+-\alpha+1 > 0$, this shows the result.
\end{proof}

%\bigskip

\section*{Acknowledgments.}
D. Balagu\'e and J.~A.~Ca\~nizo were supported by the projects
MTM2011-27739-C04-02 DGI-MICINN (Spain) and 2009-SGR-345 from
AGAUR-Generalitat de Catalunya.  The research of P. Gabriel was
supported by the French National Agency for Research through the
grants TOPPAZ ANR-09-BLAN-0218 and PAGDEG ANR-09-PIRI-0030.

%\bibliographystyle{habbrv}
%\bibliographystyle{abbrv}
%\bibliography{refs}

\end{document}